\def\5n{\negthinspace \negthinspace \negthinspace \negthinspace \negthinspace }
\def\4n{\negthinspace \negthinspace \negthinspace \negthinspace }
\def\3n{\negthinspace \negthinspace \negthinspace }
\def\2n{\negthinspace \negthinspace }
\def\1n{\negthinspace }
   \def\cF{{\cal F}}  
\def\dbH{\mathbb{H}}     
   \def\cI{{\cal I}}
 \def\sL{\mathscr{L}}
\def\dbR{\mathbb{R}}     
\def\dbS{\mathbb{S}}     
   \def\cU{{\cal U}}  
\def\dbV{\mathbb{V}}     
\def\dbX{\mathbb{X}}
\def\ss{\smallskip}                \def\lt{\left}
\def\ms{\medskip}                \def\rt{\right}
               \def\lan{\langle}
\def\ds{\displaystyle}           \def\ran{\rangle}
\def\no{\noindent}        \def\q{\quad}                      
\def\ns{\noalign{\ss}}    \def\qq{\qquad}                    
    \def\hb{\hbox}                     \def\blan{\big\langle}
                   \def\bran{\big\rangle}
         \def\rf{\eqref}                    \def\Blan{\Big\langle}
  \def\deq{\triangleq}               \def\Bran{\Big\rangle}
 \def\ae{\hbox{\rm a.e.}}           \def\({\Big (}
\def\les{\leqslant}                  \def\){\Big )}
\def\ges{\geqslant}          \def\[{\Big[}
           \def\]{\Big]}
          \def\tr{\hbox{\rm tr$\,$}}         \def\cd{\cdot}
              \def\cds{\cdots}
\def\a{\alpha}        \def\G{\Gamma}         
            \def\d{\delta}   \def\F{\Phi}     
             \def\Si{\Sigma}  \def\si{\sigma}
     \def\l{\lambda}        
    \def\t{\tau}     \def\f{\varphi}  \def\i{\infty}   
\def\ba{\begin{array}}                \def\ea{\end{array}}
\def\bel{\begin{equation}\label}      \def\ee{\end{equation}}
\def\ben{\begin{enumerate}}           \def\een{\end{enumerate}}
\newtheorem{theorem}{\indent Theorem}[section]
\newtheorem{definition}[theorem]{\indent Definition}
\newtheorem{proposition}[theorem]{\indent Proposition}
\newtheorem{lemma}[theorem]{\indent Lemma}
\newtheorem{remark}[theorem]{\indent Remark}
\newtheorem{example}[theorem]{\indent Example}
\begin{document}

\title{\bf Linear Quadratic Optimal Control Problems \\ with Fixed Terminal States and Integral
Quadratic Constraints}
\author{Jingrui Sun\thanks{Department of Mathematics, National University of Singapore, 119076,
Singapore (matsunj@nus.edu.sg).}}
\maketitle


\no\bf Abstract. \rm
This paper is concerned with a linear quadratic (LQ, for short) optimal control problem
with fixed terminal states and integral quadratic constraints.
A Riccati equation with infinite terminal value is introduced, which is uniquely solvable
and whose solution can be approximated by the solution for a suitable unconstrained LQ
problem with penalized terminal state.
Using results from duality theory, the optimal control is explicitly derived by solving
the Riccati equation together with an optimal parameter selection problem.
It turns out that the optimal control is not only a feedback of the current state,
but also a feedback of the target (terminal state).
Some examples are presented to illustrate the theory developed.

\ms

\no\bf Key words. \rm
linear quadratic optimal control, constraint, complete controllability, Riccati equation,
feedback

\ms

\no\bf AMS subject classifications. \rm 49J15, 49N10, 49N35, 93B05

\section{Introduction}

Linear quadratic (LQ, for short) problems constitute an extremely important class of optimal control problems.
They are widely encountered in many fields, such as engineering, economy, and biology, and also play an essential
role in the study of general optimal control problems.
The LQ problems have been extensively investigated since the earliest work of Bellman, Glicksberg, and Gross \cite{Bellman-Glicksberg-Gross 1958}, Kalman \cite{Kalman 1960}, and Letov \cite{Letov 1961}, however, very few
studies actually involve constraints on both the state and control variables.
There is no doubt that it is a much more challenging and interesting task to solve an LQ problem with constraints
than one without, and that developing a deeper understanding of constrained LQ problems, as well as efficient
algorithms for solving them, will have a big impact in a number of applications.

\ms

The aim of this paper is to study a class of constrained LQ optimal control problems whose main features are
that the state end-points are fixed and that there are integral quadratic constraints.
To be precise, consider the controlled linear system on a finite horizon $[t,T]$:
\bel{state}\lt\{\2n\ba{ll}
\ds\dot X(s)=A(s)X(s)+B(s)u(s),\qq s\in[t,T],\\
\ns\ds X(t)=x. \ea\rt.\ee
A control $u(\cd)$ is called {\it admissible} if $u(\cd)\in L^2(t,T;\dbR^m)\equiv\cU[t,T]$,
the space of all $\dbR^m$-valued functions that are square-integrable on $[t,T]$.
Assuming the system \rf{state} is {\it completely controllable} on $[t,T]$, we know that
for each initial state $x$ and each target $y$, there exist admissible controls $u(\cd)$ giving $X(T)=y$.
For $(t,x,y)\in[0,T)\times\dbR^n\times\dbR^n$, we denote the corresponding solution of \rf{state}
by $X(\cd\,;t,x,u(\cd))$ and define
$$\cU(t,x,y)=\big\{u:[t,T]\to\dbR^m~|~u(\cd)\in\cU[t,T]\hb{ and }
X(T;t,x,u(\cd))=y\big\}.$$
For any $(t,x,y)\in[0,T)\times\dbR^n\times\dbR^n$ and any $u(\cd)\in\cU(t,x,y)$,
the associated cost ($i=0$) and constraint functionals ($i=1,\ldots,k$) are given by
\bel{J_i}J_i(t,x,y;u(\cd))=\int_t^T\[\lan Q_i(s)X(s),X(s)\ran+\lan R_i(s)u(s),u(s)\ran\]ds,\ee
where $Q_i(\cd)$, $R_i(\cd)$, $i=0,1,\ldots,k$ are symmetric positive semi-definite matrices of
proper dimensions. Now given constants $c_1,\ldots,c_k>0$, the constrained LQ optimal control
problem considered in this paper can be stated as follows:

\ms

\bf Problem (CLQ). \rm
For any given initial pair $(t,x)\in[0,T)\times\dbR^n$ and any given target $y\in\dbR^n$,
find an admissible control $u^*(\cd)$ such that the cost functional $J_0(t,x,y;u(\cd))$
is minimized over $\cU[t,T]$, subject to the terminal state and functional constraints
\bel{constraint}X(T;t,x,u(\cd))=y, \q J_i(t,x,y;u(\cd))\les c_i;\qq i=1,\ldots,k.\ee

\ms

Any admissible control $u(\cd)$ satisfying the constraints \rf{constraint} is called a
{\it feasible} control (w.r.t. $(t,x,y)$), and it is called {\it strictly feasible}
(w.r.t. $(t,x,y)$) if the inequalities in \rf{constraint} are strict. A feasible control
is called {\it optimal} (w.r.t. $(t,x,y)$) if it solves Problem (CLQ) for the initial pair
$(t,x)$ and the target $y$. The infimum
$$V(t,x,y)\deq \inf\{J_0(t,x,y;u(\cd)): u(\cd)\hb{~is feasible w.r.t.~}(t,x,y)\}$$
is called the {\it value function} of Problem (CLQ).

\ms

The study of LQ optimal control problems has a long history that can be traced back to the work of Bellman,
Glicksberg, and Gross \cite{Bellman-Glicksberg-Gross 1958} in 1958, Kalman \cite{Kalman 1960} in 1960,
and Letov \cite{Letov 1961} in 1961. Since then, many researchers have made contributions to such kind of
problems and applications; see, for example, Geerts and Hautus \cite{Geerts-Hautus 1990},
Jurdgevic \cite{Jurdgevic 1990}, Jurdgevic and Kogan \cite{Jurdgevic-Kogan 1989}, Willems, Kitap\c{c}i,
and Silverman \cite{Willems-Kitapci-Silverman 1986}, and Yakubovich \cite{Yakubovich 1984}. For a thorough
study of unconstrained LQ problems, we further refer the reader to the classical books of Anderson
and Moore \cite{Anderson-Moore 1971,Anderson-Moore 1989}, Lee and Markus \cite{Lee-Markus 1967},
Wonham \cite{Wonham 1985}, Yong and Zhou \cite{Yong-Zhou 1999}, and the survey paper of Willems \cite{Willems 1971}.

\ms

One of the elegant features of the LQ theory is that the optimal control can be explicitly
represented in a state feedback form, through the solution to the celebrated Riccati equation.
Hence, the LQ problem can be reduced to that of solving the Riccati equation.
Generally, there are three approaches for deriving the Riccati equation, namely the maximum principle,
the dynamic programming, and the completion of squares technique.
What essentially makes these approaches successful, besides the special LQ structure,
is that the problem is not constrained.
If there are state and control constraints, the whole LQ approach may collapse.

\ms

However, many applications of optimal control theory are constrained problems.
A typical example is flight planing in which the terminal state (destination) is fixed.
Flight planners normally wish to minimize flight cost through the appropriate choice of route,
height, and speed, and by loading the minimum necessary fuel on board.
To ensure that the aircraft can safely reach the destination limits in a given time,
strict performance specifications must be adhered to in all flying conditions,
which can be expressed in the form of integral quadratic constraints.
Other applications can be found in the problem of controlling certain space structures
\cite{Toivonen-Makila 1989} and portfolio selection \cite{Hu-Zhou 2005}.
There were some attempts in attacking the constrained LQ control problems;
see for example \cite{Friedland 1967,Curtain 1984,Emirsajlow 1987,Emirsajlow 1989,
Lim-Liu-Teo-Moore 1999,Lim-Zhou 1999}. However, none of these works and their
associated analyses actually involve constraints on both the state and control variables.
Therefore there is need for the development and analysis of efficient solution techniques
for constrained LQ control problems.

\ms

The main purpose of this paper is to give a complete solution to the LQ problem with
fixed terminal states and integral quadratic constraints.
The principal method for solving the problem is combination of duality theory
and approximation techniques.
We first approach the constrained LQ problem as a convex optimization problem.
By the Lagrangian duality, it turns out that the optimal control can be derived by
solving an LQ control problem with only a terminal state constraint together with an
optimal parameter selection problem.
We then approximate the reduced LQ problem, whose terminal state is fixed, by a sequence
of standard LQ problems with penalized terminal states.
This leads to the existence and uniqueness of a solution to the Riccati equation with
infinite terminal value.
With the solutions of the Riccati equations, we are able to calculate the gradient for
the cost functional of the optimal parameter selection problem, and therefore the optimal
control is obtained, which is a feedback of both the current state and the target.

\ms

The rest of the paper is organized as follows.
Section 2 collects some preliminaries. Among other things, we establish the unique
solvability of Problem (CLQ).
In Section 3, we present the main results of the paper (with their proofs deferred
to Section 5 and 6).
In Section 4, using duality theory, we reduce Problem (CLQ) to a parameterized LQ
problem with only one constraint on the terminal state, then approximate it by a
sequence of unconstrained LQ problems with penalized terminal states.
The existence and uniqueness theorem for the Riccati equation with infinite terminal
value is proved in Section 5.
Section 6 is devoted to the proof of the main result Theorem \ref{thm:main-2}.
Some examples are presented in section 7 to illustrate the results obtained.

\section{Preliminaries}

Throughout this paper, we will denote by $M^\top$ the transpose of a matrix $M$ and by $\tr(M)$ the
trace of $M$. Let $\dbR^{n\times m}$ be the Euclidean space consisting of $(n\times m)$ real matrices
and let $\dbR^n=\dbR^{n\times1}$. The inner product in $\dbR^{n\times m}$ is denoted by $\lan M,N\ran$,
where $M,N\in\dbR^{n\times m}$, so that $\lan M,N\ran=\tr(M^\top N)$. This induces the Frobenius norm
$|M|=\sqrt{\tr(M^\top M)}$. Denote by $\dbS^n$ the space of all symmetric $(n\times n)$ real matrices,
and by $\dbS^n_+$ the space of all symmetric positive definite $(n\times n)$ real matrices.
For $\dbS^n$-valued functions $M$ and $N$, if $M-N$ is positive (respectively, semi-) definite a.e.,
we write $M>N$ (respectively, $M\ges N$), and if there exists a $\d>0$ such that $M-N\ges\d I$ a.e,
we write $M\gg N$. Let $\cI$ be an interval and $\dbH$ a Euclidean space. We shall denote by $C(\cI;\dbH)$
the space of all $\dbH$-valued continuous functions on $\cI$, and by $L^p(\cI;\dbH)$ $(1\les p\les\i)$
the space of all $\dbH$-valued functions that are $p$th power Lebesgue integrable on $\cI$.

\ms

Throughout this paper, we impose the following assumption:

\ms

{\bf(H1)} The matrices appearing in \rf{state} and \rf{J_i} satisfy
$$\left\{\2n\ba{ll}
\ds    A(\cd)\in L^1(0,T;\dbR^{n\times n}), &~ B(\cd)\in L^2(0,T;\dbR^{n\times m}),\\
\ns\ds Q_i(\cd)\in L^1(0,T;\dbS^n),         &~ Q_i(\cd)\ges0,\\
\ns\ds R_i(\cd)\in L^\i(0,T;\dbS^m),        &~ R_i(\cd)\ges0, \q R_0(\cd)\gg0.
\ea\right.$$

\ss

Consider the controlled ordinary differential system
\bel{AB}\dot X(s)=A(s)X(s)+B(s)u(s),\ee
which we briefly denote by $[A,B]$.
For $0\les t_0<t_1\les T$, we denote $\cU[t_0,t_1]\equiv L^2(t_0,t_1;\dbR^m)$.
Clearly, under (H1), for any initial pair $(t_0,x)$ and any $u(\cd)\in\cU[t_0,t_1]$,
equation \rf{AB} admits a unique solution $X(\cd)\equiv X(\cd\,;t_0,x,u(\cd))\in C([t_0,t_1];\dbR^n)$.
We now introduce the following definition.

\begin{definition}\rm
System $[A,B]$ is called {\it completely controllable on $[t_0,t_1]$},
if for any $x,y\in\dbR^n$ there exists a $u(\cd)\in\cU[t_0,t_1]$ such that
$$X(t_1;t_0,x,u(\cd))=y.$$
System $[A,B]$ is just called {\it completely controllable} if it is completely controllable on
any subinterval $[t_0,t_1]$ of $[0,T]$.
\end{definition}

It is well known that system $[A,B]$ is completely controllable on $[t_0,t_1]$ if and only if
$$\int_{t_0}^{t_1}\F_A(s)^{-1}B(s)\big[\F_A(s)^{-1}B(s)\big]^\top ds>0,$$
where $\F_A(\cd)$ is the solution to the $\dbR^{n\times n}$-valued ordinary differential equation
(ODE, for short)
\bel{F_A}\lt\{\2n\ba{ll}
\ds \dot\F_A(s)=A(s)\F_A(s), \qq s\in[0,T],\\
\ns\ds \F_A(0)=I. \ea\rt.\ee
The latter in turn is equivalent to the following {\it regular} condition:
\bel{regular}\eta^\top\F_A(s)^{-1}B(s)=0\q\ae~s\in[t_0,t_1] \q\Longrightarrow\q \eta=0.\ee
In particular, when the matrices $A(\cd)$ and $B(\cd)$ are constant-valued (time-invariant),
the complete controllability of system $[A,B]$ can be verified by checking the {\it Kalman rank condition}
$$\hb{rank}\,(B,AB,\cds,A^{n-1}B)=n.$$

In the rest of the paper, we will assume the following so that
every target $y$ can be reached from an arbitrary initial pair $(t,x)$:

\ms

{\bf(H2)} System $[A,B]$ is completely controllable.

\ms

Now returning to Problem (CLQ), we have the following basic result
which is concerned with the existence of an optimal control.

\begin{theorem}\label{thm:uni+exi}\sl
Let {\rm (H1)--(H2)} hold, and let $(t,x,y)\in [0,T)\times\dbR^n\times\dbR^n$ be given.
Suppose the set of feasible controls w.r.t. $(t,x,y)$ is nonempty. Then Problem {\rm(CLQ)}
admits a unique solution.
\end{theorem}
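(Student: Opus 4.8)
The plan is to recognize Problem (CLQ) as a convex optimization problem over the Hilbert space $\cU[t,T]=L^2(t,T;\dbR^m)$ and then apply the direct method of the calculus of variations. The starting point is the variation-of-constants formula, which under (H1) gives

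$$X(s;t,x,u(\cd))=\F_A(s)\F_A(t)^{-1}x+\int_t^s\F_A(s)\F_A(r)^{-1}B(r)u(r)\,dr,$$

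so that $u(\cd)\mapsto X(\cd\,;t,x,u(\cd))$ is a continuous affine map from $\cU[t,T]$ into $C([t,T];\dbR^n)$. Consequently each $J_i(t,x,y;\cd)$ is a continuous quadratic functional of $u(\cd)$, and since $Q_i(\cd)\ges0$ and $R_i(\cd)\ges0$ it is convex; moreover the terminal map $u(\cd)\mapsto X(T;t,x,u(\cd))$ is continuous and affine. Thus the feasible set, being the intersection of the affine subspace $\{u(\cd):X(T)=y\}$ with the sublevel sets $\{u(\cd):J_i(t,x,y;u(\cd))\les c_i\}$, is convex and strongly closed, hence weakly closed. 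Because $R_0(\cd)\gg0$, I would further record that $J_0(t,x,y;\cd)$ is strictly convex and coercive, satisfying $J_0(t,x,y;u(\cd))\ges\d\|u(\cd)\|_{L^2}^2$ for some $\d>0$.

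For existence I would run the direct method. Let $\{u_n(\cd)\}$ be a minimizing sequence of feasible controls with $J_0(t,x,y;u_n(\cd))\to V(t,x,y)$. Coercivity forces $\{u_n(\cd)\}$ to be bounded in $\cU[t,T]$, so by reflexivity of the Hilbert space I may pass to a subsequence converging weakly, $u_n(\cd)\rightharpoonup u^*(\cd)$. Weak closedness of the feasible set (via Mazur's theorem, applied to this convex strongly closed set) yields that $u^*(\cd)$ is feasible, while the weak lower semicontinuity of the convex continuous functional $J_0(t,x,y;\cd)$ gives $J_0(t,x,y;u^*(\cd))\les\liminf_n J_0(t,x,y;u_n(\cd))=V(t,x,y)$. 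Since $u^*(\cd)$ is feasible, the reverse inequality holds by definition of the infimum, so $u^*(\cd)$ is optimal.

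Uniqueness would then follow from strict convexity. If $u_1(\cd)$ and $u_2(\cd)$ were two distinct optimal controls, their average $\bar u(\cd)=\frac12(u_1(\cd)+u_2(\cd))$ would remain feasible by convexity of the feasible set, and strict convexity of $J_0(t,x,y;\cd)$, inherited from the strictly convex integrand $\lan R_0(\cd)u,u\ran$ with $R_0(\cd)\gg0$, would give $J_0(t,x,y;\bar u(\cd))<\frac12 J_0(t,x,y;u_1(\cd))+\frac12 J_0(t,x,y;u_2(\cd))=V(t,x,y)$, contradicting that $V(t,x,y)$ is the infimum of $J_0$ over feasible controls. Hence the optimal control is unique.

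The step I expect to require the most care is verifying the weak closedness of the feasible set in the presence of the integral quadratic constraints: unlike the affine terminal constraint, the sets $\{u(\cd):J_i(t,x,y;u(\cd))\les c_i\}$ are genuinely nonlinear, and their weak closedness rests entirely on the convexity furnished by $Q_i(\cd),R_i(\cd)\ges0$ together with the affine dependence of the state on the control. Once convexity and continuity of every $J_i$ are secured, the remaining ingredients — coercivity, weak sequential compactness, and weak lower semicontinuity — are standard consequences of $R_0(\cd)\gg0$ and the Hilbert space structure.
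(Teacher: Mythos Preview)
Your proposal is correct and follows essentially the same route as the paper: both arguments run the direct method of the calculus of variations, using convexity and strong closedness of the feasible set (hence weak closedness via Mazur), coercivity of $J_0$ from $R_0(\cd)\gg0$ to bound a minimizing sequence, weak lower semicontinuity of the convex continuous functional $J_0$ to conclude optimality of the weak limit, and strict convexity for uniqueness. The only differences are cosmetic---you spell out the variation-of-constants formula and the midpoint argument for uniqueness more explicitly than the paper does.
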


\begin{proof}
Let $\cF(t,x,y)$ denote the set of feasible controls w.r.t. $(t,x,y)$, that is,
$$\cF(t,x,y)=\big\{u(\cd)\in L^2(t,T;\dbR^m):X(T;t,x,u(\cd))=y,~J_i(t,x,y;u(\cd))\les c_i;~i=1,\ldots,k\big\}.$$
Observing that the mappings
$$u(\cd)\mapsto X(T;t,x,u(\cd)),\q u(\cd)\mapsto J_i(t,x,y;u(\cd));\qq i=1,\ldots,k$$
are convex and continuous, one can easily verify that $\cF(t,x,y)$ is a convex closed subset of $L^2(t,T;\dbR^m)$.
Becaus $Q_0(\cd)\ges 0$ and $R_0(\cd)\ges\d I$ for some $\d>0$, the cost functional $J_0(t,x,y;\,\cd\,)$ defined
on $\cF(t,x,y)$ is strictly convex and continuous, and hence sequentially weakly lower semicontinuous
(see \cite[Theorem 7.2.6]{Kurdila-Zabarankin 2005}).
Let $\{u_k(\cd)\}_{k=1}^\i\subseteq\cF(t,x,y)$ be a minimizing sequence for $J_0(t,x,y;\,\cd\,)$.
Since $\cF(t,x,y)$ is nonempty, we have
$$\d\int_t^T|u_k(s)|^2ds\les J_0(t,x,y;u_k(\cd))\to V(t,x,y)<\i.$$
This implies that $\{u_k(\cd)\}_{k=1}^\i$  is bounded in the Hilbert space $L^2(t,T;\dbR^m)$.
Consequently, there exists a subsequence $\{u_{k_j}(\cd)\}_{j=1}^\i$ converging weakly to some
$u^*(\cd)\in L^2(t,T;\dbR^m)$. Since $\cF(t,x,y)$ is a convex and closed, it follows form
Mazur's lemma that $u^*(\cd)\in\cF(t,x,y)$. Thus, by the sequential weak lower semicontinuity of the
mapping $u(\cd)\mapsto J_0(t,x,y;u(\cd))$,
$$V(t,x,y)\les J_0(t,x,y;u^*(\cd))\les\liminf_{j\to\i}J_0(t,x,y;u_{k_j}(\cd))=V(t,x,y),$$
from which we see $u^*(\cd)$ is an optimal control with respect to $(t,x,y)$.
The uniqueness follows directly from the strict convexity of $u(\cd)\mapsto J_0(t,x,y;u(\cd))$.
\end{proof}

\section{Main results}

Let $Q(\cd)\in L^1(0,T;\dbS^n)$ and $R(\cd)\in L^\i(0,T;\dbS^m)$ be such that
\bel{QR>0}Q(\cd)\ges0, \qq R(\cd)\gg0.\ee
Consider the following Riccati-type equations:
\begin{eqnarray}
&\label{Ric:P}\ \left\{\2n\ba{ll}
\ds \dot P(s)+P(s)A(s)+A(s)^\top P(s)+Q(s)-P(s)B(s)R(s)^{-1}B(s)^\top P(s)=0,\q s\in[0,T),\\
\ns \lim_{s\to T}\min\si(P(s))=\i, \ea\right.&\\
\ns&\label{Ric:Pi}\left\{\2n\ba{ll}
\ds \dot\Pi(s)+\Pi(s)A(s)+A(s)^\top\Pi(s)-Q(s)+\Pi(s)B(s)R(s)^{-1}B(s)^\top\Pi(s)=0,\q s\in(t,T],\\
\ns \lim_{s\to t}\min\si(\Pi(s))=\i, \ea\right.&
\end{eqnarray}
where $\si(M)$ denotes the spectrum of a matrix $M$. Our first result can be stated as follows.

\begin{theorem}\label{thm:main-1}\sl
Let {\rm(H1)--(H2)} hold. Then the Riccati equations \rf{Ric:P} and \rf{Ric:Pi} admit unique solutions
$P(\cd)\in C([0,T);\dbS^n_+)$ and $\Pi(\cd)\in C((t,T];\dbS^n_+)$, respectively.
\end{theorem}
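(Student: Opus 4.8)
The plan is to obtain $P(\cd)$ as the monotone limit of a penalized family, to read off uniqueness from the underlying Hamiltonian flow, and then to deduce \rf{Ric:Pi} from \rf{Ric:P} by reversing time. First I would, for each integer $N\ges1$, solve the Riccati equation $\dot P_N+P_NA+A^\top P_N+Q-P_NSP_N=0$ on $[0,T]$ with the finite terminal value $P_N(T)=NI$, where $S\deq BR^{-1}B^\top$ lies in $L^1(0,T;\dbS^n)$ since $R^{-1}\in L^\i$ (as $R\gg0$) and $B\in L^2$. By the standard unconstrained LQ theory under $Q\ges0$, $R\gg0$ (see \cite{Yong-Zhou 1999}), each $P_N$ exists on the whole interval, is positive semidefinite, and represents the value of the penalized problem, $\lan P_N(s)x,x\ran=\inf_{u}\big\{\int_s^T[\lan QX,X\ran+\lan Ru,u\ran]d\t+N|X(T)|^2\big\}$ with $X=X(\cd\,;s,x,u)$. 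From this representation $P_N(s)$ is nondecreasing in $N$, and for $x\neq0$ it is strictly positive (driving the penalty to zero would force the control energy to vanish, leaving the nonzero free evolution at $T$). To bound the family from above on each $[0,T-\e]$ I would invoke complete controllability \rf{regular}: there is a control steering $x$ at time $s$ to $X(T)=0$, whose cost is finite and, via the controllability Gramian on $[s,T]$, bounded by $C_\e|x|^2$ uniformly for $s\les T-\e$; since that cost dominates $\lan P_N(s)x,x\ran$ for every $N$, the limit $P(s)\deq\lim_{N\to\i}P_N(s)$ is finite on $[0,T)$. Passing to the limit in the integral form of the equation by dominated convergence (the quadratic term is dominated by a multiple of $|S|\in L^1$) shows that $P\in C([0,T);\dbS^n_+)$ solves \rf{Ric:P}, with $P\ges P_1>0$.

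The delicate point—and the step I expect to be the main obstacle—is the terminal blow-up $\lim_{s\to T}\min\si(P(s))=\i$. Here I would argue directly from the penalized value: dropping the nonnegative running state cost and using $R\ges\d I$, $\lan P_N(s)x,x\ran\ges\inf_u\{\d\|u\|_{L^2(s,T)}^2+N|X(T)|^2\}$. Writing $X(T)=\F_A(T)\F_A(s)^{-1}x+\int_s^T\F_A(T)\F_A(\t)^{-1}B(\t)u(\t)d\t$ and setting $a(s)=|\F_A(T)\F_A(s)^{-1}x|$ and $\eta(s)^2=\int_s^T|\F_A(T)\F_A(\t)^{-1}B(\t)|^2d\t$, a two-case estimate (according to whether $\|u\|$ exceeds $a(s)/(2\eta(s))$) gives $\lan P_N(s)x,x\ran\ges\min\{\d a(s)^2/(4\eta(s)^2),\,Na(s)^2/4\}$. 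Letting $N\to\i$ yields $\lan P(s)x,x\ran\ges\d a(s)^2/(4\eta(s)^2)$. Because $\F_A(T)\F_A(s)^{-1}\to I$ as $s\to T$, one has $a(s)\ges|x|/2$ uniformly over unit vectors $x$ for $s$ near $T$, while $\eta(s)\to0$ (as $B\in L^2$); hence $\min\si(P(s))\ges\d/(16\eta(s)^2)\to\i$.

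For uniqueness I would pass to the Hamiltonian system associated with \rf{Ric:P}. Its coefficient matrix $\left(\begin{smallmatrix}A&-S\\-Q&-A^\top\end{smallmatrix}\right)$ has entries in $L^1(0,T)$, so its transition matrix $\Upsilon(\cd,T)$ is well defined and continuous on $[0,T]$. Any solution $P$ of \rf{Ric:P} can be written as $P=YX^{-1}$ for a solution $(X,Y)$ of the Hamiltonian system, so the graph of $P(s)$ is the Lagrangian subspace $\ell(s)=\Upsilon(s,T)\ell_T$, with $\ell_T=\lim_{s\to T}\ell(s)$. The condition $\min\si(P(s))\to\i$ forces every slope to diverge, so $\ell(s)$ tends to the vertical subspace $\{0\}\times\dbR^n$; that is, $\ell_T=\{0\}\times\dbR^n$ regardless of $P$. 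Since the Hamiltonian flow is deterministic, $\ell(s)$—and hence $P(s)=YX^{-1}$ wherever $X(s)$ is invertible, which by the existence part is all of $[0,T)$—is uniquely determined. This identifies every solution with $\lim_N P_N$ and proves uniqueness.

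Finally, for \rf{Ric:Pi} I would substitute $r=t+T-s$ and set $\L(r)=\Pi(t+T-r)$; a direct computation turns \rf{Ric:Pi} into a Riccati equation of exactly the form \rf{Ric:P} on $[t,T]$ for the system $[-A(t+T-\cd),\,B(t+T-\cd)]$, with the blow-up now occurring at the right endpoint $r=T$ (i.e. $s=t$). Since time reversal preserves complete controllability—the reversed control system is the exact time-reversal of \rf{state}, so \rf{regular} continues to hold—the existence and uniqueness just established apply verbatim and deliver the unique $\Pi(\cd)\in C((t,T];\dbS^n_+)$.
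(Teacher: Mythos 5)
Your proposal is correct, and its skeleton---existence by monotone penalization with terminal data $NI$, upper bounds on compact subintervals of $[0,T)$ obtained from complete controllability via a Gramian-based steering control, passage to the limit in the integral form of the equation by dominated convergence, and reduction of \rf{Ric:Pi} to \rf{Ric:P} by time reversal---is exactly the paper's (Theorem \ref{thm:exi-Ric} and Proposition \ref{prop:exi-Pi}, with your $P_N$ being the paper's $P_i$ from \rf{Ric-P_i}). You deviate in two sub-arguments, both validly. (i) Blow-up at $T$: the paper's argument is one line---since $P(s)\ges P_N(s)$ for every $N$ and every $s<T$, and $P_N$ is continuous at $T$ with $P_N(T)=NI$, one gets $\liminf_{s\to T}\min\si(P(s))\ges N$ for all $N$. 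Your two-case estimate through the free dynamics and the quantity $\eta(s)^2=\int_s^T|\F_A(T)\F_A(\t)^{-1}B(\t)|^2d\t$ is heavier, but it buys a quantitative blow-up rate, $\min\si(P(s))\ges\d/(16\,\eta(s)^2)$, which the paper's comparison argument cannot give. (ii) Uniqueness: here your route is genuinely different. The paper picks $\t<T$ with both solutions positive definite on $[\t,T)$, observes that $\Si\deq P^{-1}$ extends continuously to $T$ by $\Si(T)=0$ and solves the dual Riccati equation $\dot\Si-A\Si-\Si A^\top-\Si Q\Si+BR^{-1}B^\top=0$, and then applies Gronwall twice (first to $\Si_1-\Si_2$ on $[\t,T]$, then to $P_1-P_2$ on $[0,\t]$). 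You instead lift to the linear Hamiltonian system: the blow-up condition forces the graph of $P(s)$ to converge in the Grassmannian to the vertical Lagrangian subspace $\{0\}\times\dbR^n$, and since the transition matrix $\Upsilon$ is continuous and invertible up to $T$ (coefficients in $L^1$), every solution has graph equal to $\Upsilon(s,T)\big(\{0\}\times\dbR^n\big)$, hence is unique. The paper's inversion trick is the more elementary of the two, staying at the level of matrix ODEs; yours is more conceptual, exhibiting the infinite terminal value as honest terminal data (a Lagrangian plane) for a linear flow, and it would extend verbatim to other prescribed terminal subspaces. Two minor points of care in your write-up: the invertibility of $X(s)$ solving $\dot X=(A-BR^{-1}B^\top P)X$, $X(0)=I$, is automatic for any continuous $P$ on $[0,T)$ because $X$ is a fundamental matrix, so no appeal to the existence part is needed there; and the graph-to-vertical convergence uses $P(s)>0$ near $T$, which the blow-up condition indeed supplies.
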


The proof of Theorem \ref{thm:main-1} will be given in the Section 5.
Let us for the moment look at some properties of the solution $P(\cd)$ to \rf{Ric:P}.
Consider the matrix-valued ODE
\bel{Phi}\lt\{\2n\ba{ll}
\ds \dot\F(s)=\big[A(s)-B(s)R(s)^{-1}B(s)^\top P(s)\big]\F(s),\qq s\in[0,T),\\
\ns\ds \F(0)=I. \ea\rt.\ee
Obviously, \rf{Phi} admits a unique solution $\F(\cd)\in C([0,T);\dbR^{n\times n})$
which is invertible. However, one cannot conclude hastily that the solution $\F(\cd)$
could be extended to the whole interval $[0,T]$ because $P(s)$ explodes as $s\uparrow T$.
The following result gives a rigorous discussion of this issue.

\begin{proposition}\label{prop:P-1}\sl
Let {\rm(H1)--(H2)} hold, and let $P(\cd)\in C([0,T);\dbS^n_+)$ be the solution to the Riccati
equation \rf{Ric:P}. The solution $\F(\cd)$ of \rf{Phi} satisfies $\lim_{s\to T}\F(s)=0$.
\end{proposition}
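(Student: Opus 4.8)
The plan is to track the matrix-valued quantity $W(s)\deq\F(s)^\top P(s)\F(s)$ on the open interval $[0,T)$, exploiting simultaneously the dynamics \rf{Phi} and the Riccati equation \rf{Ric:P}, and then to deduce the decay of $\F(\cd)$ from the blow-up of $P(\cd)$ at $T$. First I would differentiate $W(\cd)$. Using $\dot\F=(A-BR^{-1}B^\top P)\F$ together with the substitution $\dot P=-PA-A^\top P-Q+PBR^{-1}B^\top P$ from \rf{Ric:P}, a direct computation shows that all the terms containing $A$ and $A^\top$ cancel, while two of the three copies of the quadratic term $\F^\top PBR^{-1}B^\top P\F$ cancel against the one produced by $\dot P$, leaving the clean identity

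\bel{eq:Wderiv}\frac{d}{ds}\big[\F(s)^\top P(s)\F(s)\big]=-\F(s)^\top Q(s)\F(s)-\F(s)^\top P(s)B(s)R(s)^{-1}B(s)^\top P(s)\F(s),\qq s\in[0,T).\ee

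Next I would read off monotonicity. Since $Q(\cd)\ges0$ and $R(\cd)\gg0$ forces $R(\cd)^{-1}>0$, both summands on the right-hand side of \rf{eq:Wderiv} are negative semi-definite, so $W(\cd)$ is non-increasing on $[0,T)$. In particular, because $\F(0)=I$,

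$$\F(s)^\top P(s)\F(s)\les\F(0)^\top P(0)\F(0)=P(0),\qq s\in[0,T),$$

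which is a \emph{uniform} upper bound by the constant matrix $P(0)$.

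Finally I would match this bound against the terminal condition in \rf{Ric:P}. For any unit vector $\eta\in\dbR^n$ and any $s\in[0,T)$, the inequality $P(s)\ges\min\si(P(s))\,I$ yields

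$$\min\si(P(s))\,|\F(s)\eta|^2\les\big(\F(s)\eta\big)^\top P(s)\big(\F(s)\eta\big)=\eta^\top\F(s)^\top P(s)\F(s)\,\eta\les\eta^\top P(0)\eta\les\max\si(P(0)),$$

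so that $|\F(s)\eta|^2\les\max\si(P(0))/\min\si(P(s))$. Taking the supremum over unit vectors $\eta$ and letting $s\to T$, the right-hand side tends to zero because $\min\si(P(s))\to\i$, and hence $\F(s)\to0$. The only step requiring genuine care is the derivative computation leading to \rf{eq:Wderiv}; once that identity is secured, the argument is automatic. The point to watch is that \rf{eq:Wderiv} is valid only on $[0,T)$, where $P(\cd)$ is finite and differentiable, so the conclusion must be extracted as a limit $s\to T$ rather than by any evaluation at the singular endpoint.
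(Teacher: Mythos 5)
Your proposal is correct and follows essentially the same route as the paper: the paper bounds the quadratic form $\lan P(s)\F(s)x,\F(s)x\ran$ by $\lan P(0)x,x\ran$ via integration by parts (which is exactly your derivative identity for $\F^\top P\F$ in integrated form), and then concludes from the blow-up of the minimal eigenvalue of $P(s)$ as $s\to T$. The only cosmetic difference is that you phrase the computation as monotonicity of the matrix $W(s)=\F(s)^\top P(s)\F(s)$ rather than as an integral identity evaluated on a fixed vector, which changes nothing of substance.
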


\begin{proof}
Let $x\in\dbR^n$ be arbitrary. For any $0<s<T$, integration by parts gives
\begin{eqnarray*}
&&\lan P(s)\F(s)x,\F(s)x\ran-\lan P(0)x,x\ran\\
&&=\int_0^s\Blan\Big\{\dot P(r)+P(r)\big[A(r)-B(r)R(r)^{-1}B(r)^\top P(r)\big]\\
&&\qq\q~~+\big[A(r)-B(r)R(r)^{-1}B(r)^\top P(r)\big]^\top P(r)\Big\}\F(r)x,\F(r)x\Bran dr\\
&&=-\int_t^s\blan\big[Q(r)+P(r)B(r)R(r)^{-1}B(r)^\top P(r)\big]\F(r)x,\F(r)x\bran dr\les0.
\end{eqnarray*}
Let $\l_s$ denote the minimal eigenvalue of $P(s)$. Then the above yields
$$\l_s|\F(s)x|^2\les\lan P(s)\F(s)x,\F(s)x\ran\les\lan P(0)x,x\ran.$$
Since $\l_s\to\i$ as $s\to T$ and $x$ is arbitrary, we must have $\lim_{s\to T}\F(s)=0$.
\end{proof}

In light of Proposition \ref{prop:P-1}, the solution $\F(\cd)$ of \rf{Phi} has
a continuous extension to $[0,T]$. Thus, the ODE
\bel{Psi}\lt\{\2n\ba{ll}
\ds \dot\Psi(s)=-A(s)^\top\Psi(s)-Q(s)\F(s),\qq s\in[0,T],\\
\ns\ds \Psi(0)=P(0) \ea\rt.\ee
admits a unique solution $\Psi(\cd)$ on the whole interval $[0,T]$, and we have the following:

\begin{proposition}\sl
Let {\rm(H1)--(H2)} hold, and let $P(\cd)\in C([0,T);\dbS^n_+)$ be the solution to the
Riccati equation \rf{Ric:P}. The solution $\F(\cd)$ of \rf{Phi} satisfies
$$\lim_{s\to T}P(s)\F(s)=\Psi(T).$$
\end{proposition}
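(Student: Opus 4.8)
The plan is to recognize that the matrix-valued function $W(\cd)\deq P(\cd)\F(\cd)$ solves \emph{exactly} the same linear initial value problem \rf{Psi} as $\Psi(\cd)$, so that $W\equiv\Psi$ on $[0,T)$ and the asserted limit reduces to the continuity of $\Psi$ at $T$. First I would fix $s\in[0,T)$, where $P(s)$ is finite, and differentiate the product $W(s)=P(s)\F(s)$ by the product rule, substituting $\dot P(s)$ from the Riccati equation \rf{Ric:P} and $\dot\F(s)$ from \rf{Phi}. The crux of the computation is that, after expanding, the two terms $\pm P(s)A(s)\F(s)$ cancel and the two quadratic terms $\pm P(s)B(s)R(s)^{-1}B(s)^\top P(s)\F(s)$ cancel, leaving
$$\dot W(s)=-A(s)^\top P(s)\F(s)-Q(s)\F(s)=-A(s)^\top W(s)-Q(s)\F(s),$$
which is precisely the ODE in \rf{Psi}. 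Since $\F(0)=I$, we also have $W(0)=P(0)=\Psi(0)$.

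Next I would invoke uniqueness of solutions to the linear equation \rf{Psi}. Because $\F(\cd)$ is continuous (hence bounded) on $[0,T)$ and $A(\cd),Q(\cd)$ are integrable, the initial value problem \rf{Psi} has a unique solution, and both $W$ and $\Psi$ satisfy it on $[0,T)$ with the same initial datum. Hence $P(s)\F(s)=W(s)=\Psi(s)$ for every $s\in[0,T)$. Finally, since $\F(\cd)$ extends continuously to $s=T$ by Proposition \ref{prop:P-1}, the function $\Psi(\cd)$ solves \rf{Psi} on the whole closed interval $[0,T]$ and is in particular continuous at $T$; therefore $\lim_{s\to T}P(s)\F(s)=\lim_{s\to T}\Psi(s)=\Psi(T)$.

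The only point requiring care is conceptual rather than technical: although $P(s)$ blows up as $s\uparrow T$ (its smallest eigenvalue tends to $\i$) while $\F(s)\to0$, the product $P(s)\F(s)$ nevertheless stays bounded and converges. This is \emph{not} established by balancing the diverging and vanishing factors against each other, but falls out for free from the exact identity $P\F\equiv\Psi$; the cancellation in the derivative computation is exactly what tames the product. I expect no genuine obstacle beyond verifying that cancellation carefully and confirming that the standard existence-uniqueness theory applies to \rf{Psi} under (H1).
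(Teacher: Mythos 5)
Your proposal is correct and is essentially identical to the paper's own proof: both differentiate $P(\cd)\F(\cd)$, use the cancellations from \rf{Ric:P} and \rf{Phi} to show it satisfies the linear ODE \rf{Psi} with the same initial value $P(0)$, conclude $P(\cd)\F(\cd)\equiv\Psi(\cd)$ on $[0,T)$ by uniqueness, and obtain the limit from the continuity of $\Psi$ at $T$ (which rests on Proposition \ref{prop:P-1} guaranteeing $\F$ extends to $[0,T]$). There is no gap.
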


\begin{proof}
By differentiating we get
\begin{eqnarray*}
{d\over ds}[P(s)\F(s)] \3n&=\3n& \dot P(s)\F(s)+P(s)\dot\F(s)\\
\3n&=\3n& \big[\dot P(s)+P(s)A(s)-P(s)B(s)R(s)^{-1}B(s)^\top P(s)\big]\F(s)\\
\3n&=\3n& -A(s)^\top [P(s)\F(s)]-Q(s)\F(s),\qq s\in[0,T).
\end{eqnarray*}
Thus, $P(\cd)\F(\cd)$ satisfies equation \rf{Psi} on the interval $[0,T)$.
By uniqueness of solutions, we must have $P(s)\F(s)=\Psi(s)$ for all $s\in[0,T)$.
The desired result then follows immediately.
\end{proof}

Let $\G=\{(\l_1,\ldots,\l_k):\l_i\ges0,~i=1,\ldots,k\}$ and define for $\l=(\l_1,\ldots,\l_k)\in\G$,
\bel{QR(lamda)}Q(\l,s)=Q_0(s)+\sum_{i=1}^k \l_iQ_i(s), \q R(\l,s)=R_0(s)+\sum_{i=1}^k \l_iR_i(s).\ee
We have from Theorem \ref{thm:main-1} that under (H1)--(H2), the following ($\l$-dependent) Riccati
equations are uniquely solvable:
\begin{eqnarray}
&&\label{Ric:P(l)}\left\{\2n\ba{ll}
\ds \dot P(\l,s)+P(\l,s)A(s)+A(s)^\top P(\l,s)+Q(\l,s)\\
\ns\ds~~-P(\l,s)B(s)R(\l,s)^{-1}B(s)^\top P(\l,s)=0,\qq s\in[0,T),\\
\ns \lim_{s\to T}\min\si(P(\l,s))=\i,\ea\right.\\
\ns&&\label{Ric:Pi(l)}\left\{\2n\ba{ll}
\ds \dot\Pi(\l,s)+\Pi(\l,s)A(s)+A(s)^\top\Pi(\l,s)-Q(\l,s)\\
\ns\ds~~+\Pi(\l,s)B(s)R(\l,s)^{-1}B(s)^\top\Pi(\l,s)=0,\qq s\in(t,T],\\
\ns \lim_{s\to t}\min\si(\Pi(\l,s))=\i.\ea\right.
\end{eqnarray}
Let $\F(\l,\cd)$ and $\Psi(\l,\cd)$ be the solutions to
\bel{Phi(l)}\lt\{\2n\ba{ll}
\ds \dot\F(\l,s)=\big[A(s)-B(s)R(\l,s)^{-1}B(s)^\top P(\l,s)\big]\F(\l,s),\qq s\in[0,T),\\
\ns\ds \F(\l,0)=I \ea\rt.\ee
and
\bel{Psi(l)}\lt\{\2n\ba{ll}
\ds \dot\Psi(\l,s)=-A(s)^\top\Psi(\l,s)-Q(\l,s)\F(\l,s),\qq s\in[0,T],\\
\ns\ds \Psi(\l,0)=P(\l,0), \ea\rt.\ee
respectively. We are ready for our next main result, whose proof will be given in Section 6.

\begin{theorem}\label{thm:main-2}\sl
Let {\rm(H1)--(H2)} hold, and let $(t,x,y)\in [0,T)\times\dbR^n\times\dbR^n$ be given.
Suppose there exists at least one strictly feasible control w.r.t. $(t,x,y)$.
Then the function $L(\,\cd\,,t,x,y):\G\to\dbR$ defined by
$$L(\l,t,x,y)\deq\lan P(\l,t)x,x\ran-2\lan\Psi(\l,T)\F(\l,t)^{-1}x,y\ran
+\lan \Pi(\l,T)y,y\ran-\l^\top c $$
achieves its maximum at some $\l^*\in\G$, and the optimal control of Problem {\rm(CLQ)} is given by
\bel{u(l*)}u(\l^*,s)=-R(\l^*,s)^{-1}B(s)^\top\big[P(\l^*,s)X(\l^*,s)+\eta(\l^*,s)\big],\qq s\in[t,T),\ee
where
$$\eta(\l^*,s)=-\big[\Psi(\l^*,T)\F(\l^*,s)^{-1}\big]^\top y,\qq s\in[0,T),$$
and $X(\l^*,\cd)$ is the solution to the closed-loop system
$$\left\{\2n\ba{ll}
\ds\dot X(\l^*,s)=\big[A(s)-B(s)R(\l^*,s)^{-1}B(s)^\top P(\l^*,s)\big]X(\l^*,s)\\
\ns\ds\hphantom{\dot X(\l^*,s)=} -B(s)R(\l^*,s)^{-1}B(s)^\top\eta(\l^*,s),\qq s\in[t,T), \\
\ns\ds X(\l^*,t)=x. \ea\right.$$
\end{theorem}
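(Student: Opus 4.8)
The plan is to treat Problem (CLQ) as a convex program and to solve it by Lagrangian duality, dualizing the $k$ integral inequality constraints $J_i\les c_i$ while keeping the terminal constraint $X(T)=y$ as a hard constraint. For $\l=(\l_1,\ldots,\l_k)\in\G$, I would form the Lagrangian $\cL(u;\l)=J_0(t,x,y;u(\cd))+\sum_{i=1}^k\l_i\big(J_i(t,x,y;u(\cd))-c_i\big)$. Because the $J_i$ all share the quadratic structure \rf{J_i}, this collapses to $\cL(u;\l)=\int_t^T\big[\lan Q(\l,s)X(s),X(s)\ran+\lan R(\l,s)u(s),u(s)\ran\big]ds-\l^\top c$ with $Q(\l,\cd),R(\l,\cd)$ as in \rf{QR(lamda)}. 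The dual function is $g(\l)=\inf\{\cL(u;\l):u(\cd)\in\cU(t,x,y)\}$, which for each fixed $\l$ is exactly a parameterized LQ problem with fixed endpoints $X(t)=x$, $X(T)=y$ and running weights $Q(\l,\cd),R(\l,\cd)$; since $R(\l,\cd)\gg0$ and $Q(\l,\cd)\ges0$, this inner problem is uniformly convex and the Riccati equations \rf{Ric:P(l)}--\rf{Ric:Pi(l)} apply.

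The heart of the argument is to solve this inner problem explicitly and to identify $g(\l)=L(\l,t,x,y)$. Using the Riccati solution $P(\l,\cd)$ from Theorem \ref{thm:main-1} together with the affine correction $\eta(\l,\cd)=-[\Psi(\l,T)\F(\l,\cd)^{-1}]^\top y$, I would complete the square along the controlled trajectory: differentiating $\lan P(\l,s)X(s),X(s)\ran+2\lan\eta(\l,s),X(s)\ran$ and invoking the Riccati equation together with \rf{Phi(l)} and \rf{Psi(l)} rewrites the running cost as a perfect square in $u(\cd)+R(\l,\cd)^{-1}B(\cd)^\top[P(\l,\cd)X(\cd)+\eta(\l,\cd)]$ plus terms that integrate to boundary contributions. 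The square is annihilated precisely by the feedback control \rf{u(l*)} (with $\l$ in place of $\l^*$), and the remaining boundary data reduce to the three quadratic terms of $L$; hence $g(\l)=L(\l,t,x,y)$, and the inner minimizer is the control displayed in the theorem.

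Next I would establish the duality relations. Weak duality gives $L(\l,t,x,y)=g(\l)\les V(t,x,y)$ for every $\l\in\G$, so $\sup_{\G}L\les V(t,x,y)$. The reverse inequality follows from the strict feasibility hypothesis by a standard strong-duality theorem under Slater's condition, yielding $\sup_{\G}L=V(t,x,y)$. For attainment I would note that $L(\cd,t,x,y)$ is concave, being an infimum of functions affine in $\l$; moreover, if $\bar u$ is strictly feasible then $L(\l,t,x,y)\les J_0(t,x,y;\bar u)-\sum_{i=1}^k\l_i\big(c_i-J_i(t,x,y;\bar u)\big)$ with every coefficient $c_i-J_i(t,x,y;\bar u)>0$, so $L(\l,t,x,y)\to-\i$ as $|\l|\to\i$ within $\G$. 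Concavity, coercivity and upper semicontinuity then force the supremum to be attained at some $\l^*\in\G$.

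Finally, at $\l^*$ the inner minimizer $u(\l^*,\cd)$ attains $g(\l^*)=V(t,x,y)$, so by strong duality it solves (CLQ); complementary slackness $\l_i^*\big(J_i(t,x,y;u(\l^*,\cd))-c_i\big)=0$ confirms that $u(\l^*,\cd)$ is feasible, and its feedback form is the one computed above, while uniqueness is already guaranteed by Theorem \ref{thm:uni+exi}. I expect the main obstacle to be the completion-of-squares step near $s=T$, where $P(\l,s)$ explodes: one must pass to the limit in the terminal boundary term using Proposition \ref{prop:P-1} (which gives $\F(\l,s)\to0$) and the limit $P(\l,s)\F(\l,s)\to\Psi(\l,T)$, and simultaneously verify that the closed-loop trajectory generated by \rf{u(l*)} indeed satisfies $X(\l^*,T)=y$, so that the target enters the value exactly through the cross term $\Psi(\l,T)\F(\l,t)^{-1}$ and through $\Pi(\l,T)y$.
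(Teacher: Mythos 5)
Your outer duality layer is correct and is essentially the paper's own reduction (Theorem \ref{thm:duality}): weak duality, strong duality under the strict-feasibility (Slater) hypothesis, concavity and coercivity of the dual function giving attainment of $\l^*$, and complementary slackness; the paper obtains exactly this by citing Luenberger's Lagrange duality theorem. The genuine gap is in the inner step, where you claim that completing the square with the singular pair $(P(\l,\cd),\eta(\l,\cd))$ turns the running cost into ``a perfect square plus terms that integrate to boundary contributions,'' whose boundary data ``reduce to the three quadratic terms of $L$.'' That is precisely the hard part of the theorem, and as sketched it does not go through. Integrating the derivative of $\lan P(\l,s)X(s),X(s)\ran+2\lan\eta(\l,s),X(s)\ran$ along an admissible trajectory over $[t,\tau]$, $\tau<T$, produces, besides the perfect square, the correction term
$$-\int_t^\tau\blan R(\l,s)^{-1}B(s)^\top\eta(\l,s),B(s)^\top\eta(\l,s)\bran ds,$$
and this integral diverges as $\tau\to T$: since $\F(\l,s)\to0$ (Proposition \ref{prop:P-1}), $\eta(\l,s)=-\big[\Psi(\l,T)\F(\l,s)^{-1}\big]^\top y$ blows up, and in general its square is not integrable near $T$ (in the paper's first example in Section 7, $\eta(s)=-2e^Ty/(e^{2T-s}-e^s)$, which grows like $(T-s)^{-1}$). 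The boundary term at $\tau$ diverges in the opposite direction, and the finite cost emerges only from the cancellation of these two infinities; your sketch flags only the boundary term and never addresses the divergent integral. More seriously, your computation gives no mechanism by which $\lan\Pi(\l,T)y,y\ran$ appears: $\Pi(\l,\cd)$ solves the second, time-reversed Riccati equation \rf{Ric:Pi(l)}, an object your completion of squares never touches.

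This is exactly what the paper's proof is built to avoid. It never manipulates the singular quantities directly; instead it computes the value and the optimal control of the inner problem (CLQ*) as limits of the penalized problems (LQ)$_i$, in which every term is finite: $V_i(t,x,y)=\lan P_i(t)x,x\ran+2\lan\eta_i(t),x\ran+V_i(t,0,y)$, and the three limits are supplied by three separate results --- Remark \ref{rmk:P_i} for $P_i\to P$; Lemma \ref{lmm:3} for $\eta_i(t)\to-\big[\Psi(T)\F(t)^{-1}\big]^\top y$, which rests on the nontrivial limit $i\F_i(T)\to\Psi(T)$; and Theorem \ref{thm:lim-LQi}(i) combined with Proposition \ref{prop:exi-Pi} (proved by time reversal) for $V_i(t,0,y)\to\lan\Pi(T)y,y\ran$. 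The feedback form \rf{u(l*)} is then identified as the weak $L^2$-limit of the penalized feedbacks via Lemma \ref{lmm:1} and Theorem \ref{thm:lim-LQi}(ii), which also settles the point you defer, namely that the closed-loop trajectory actually reaches $y$. To repair your argument you would have to prove, along the closed-loop trajectory (with equality) and along every trajectory in $\cU(t,x,y)$ (as an inequality), the cancellation identity
$$\lim_{\tau\to T}\Big[-\lan P(\l,\tau)X(\tau),X(\tau)\ran-2\lan\eta(\l,\tau),X(\tau)\ran
-\int_t^\tau\blan R(\l,s)^{-1}B(s)^\top\eta(\l,s),B(s)^\top\eta(\l,s)\bran ds\Big]=\lan\Pi(\l,T)y,y\ran,$$
which requires a genuinely new link between $P$, $\F$, $\Psi$ and $\Pi$ that your proposal does not supply --- or else fall back on the approximation route the paper takes.
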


\begin{remark}\rm
Form the representation \rf{u(l*)}, we see that the optimal control of Problem (CLQ)
is not only a feedback of the current state, but also a feedback of the target.
\end{remark}

\section{Approach by standard LQ problems}

In this section we approach Problem (CLQ) by a class of LQ problems without constraints.
Our first step is to reduce Problem (CLQ) to an LQ problem without the integral quadratic
constraints by means of the Lagrangian duality.
It is worth noting that the reduced LQ problem is still not standard because the terminal
state is fixed.

\ms

For $\l\in\G=\{(\l_1,\ldots,\l_k):\l_i\ges0,~i=1,\ldots,k\}$, let
\bel{J(lamda)}\ba{lll}
\ds J(\l,t,x,y;u(\cd)) &\3n=\3n&\ds J_0(t,x,y;u(\cd))+\sum_{i=1}^k\l_i J_i(t,x,y;u(\cd))\\
\ns&\3n=\3n&\ds \int_t^T\[\lan Q(\l,s)X(s),X(s)\ran+\lan R(\l,s)u(s),u(s)\ran\]ds,
\ea\ee
where $Q(\l,s)$ and $R(\l,s)$ are defined by \rf{QR(lamda)}. Consider the following Problem:

\ms

\bf Problem (CLQ*). \rm
For any given initial pair $(t,x)\in[0,T)\times\dbR^n$ and any target $y\in\dbR^n$,
find a $u^*(\l,\cd)\in\cU(t,x,y)$ such that
$$J(\l,t,x,y;u^*(\l,\cd))=\inf_{u(\cd)\in\cU(t,x,y)}J(\l,t,x,y;u(\cd))\deq V(\l,t,x,y).$$

\ss

By the Lagrange duality theorem, we have the following result.

\begin{theorem}\label{thm:duality}\sl
Let {\rm (H1)--(H2)} hold, and let $(t,x,y)\in [0,T)\times\dbR^n\times\dbR^n$ be given.
Then for any $\l\in\G$, Problem {\rm(CLQ*)} admits a unique optimal control $u^*(\l,\cd)$.
If, in addition, there exists a strictly feasible control w.r.t. $(t,x,y)$,
then the dual functional
\bel{dual-f}\f(\l)\deq J(\l,t,x,y;u^*(\l,\cd))-\l^\top c, \qq \l\in\G\ee
achieves its maximum at some $\l^*\in\G$, and the unique optimal control of Problem {\rm(CLQ)}
is $u^*(\l^*,\cd)$.
\end{theorem}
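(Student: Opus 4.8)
The plan is to read Problem (CLQ) as a convex program whose inequality constraints $J_i(t,x,y;u(\cd))\les c_i$ are finite in number, and to apply Lagrangian duality with the multiplier $\l$ ranging over the finite-dimensional cone $\G$. The statement divides into three tasks: unique solvability of (CLQ*) for each fixed $\l$; attainment of the dual maximum; and the identification of $u^*(\l^*,\cd)$ as the optimal control of (CLQ). For the first task, fix $\l\in\G$. By (H2) the set $\cU(t,x,y)$ is nonempty, and since $u(\cd)\mapsto X(T;t,x,u(\cd))$ is affine and continuous, $\cU(t,x,y)$ is a closed convex (affine) subset of $L^2(t,T;\dbR^m)$. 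Because $R(\l,\cd)=R_0(\cd)+\sum_{i=1}^k\l_iR_i(\cd)\gg0$, the functional $J(\l,t,x,y;\cd)$ is strictly convex, continuous and coercive there, so the argument of Theorem \ref{thm:uni+exi} applies verbatim: a minimizing sequence is bounded in $L^2$, a weak limit lies in $\cU(t,x,y)$ by Mazur's lemma, and weak lower semicontinuity yields a minimizer, unique by strict convexity. This defines $u^*(\l,\cd)$ and $\f(\l)=V(\l,t,x,y)-\l^\top c$.

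Next I would record weak duality. For any control $u(\cd)$ feasible w.r.t. $(t,x,y)$ and any $\l\in\G$, the bounds $\l_i\ges0$ and $J_i(t,x,y;u(\cd))-c_i\les0$ give
$$\f(\l)\les J(\l,t,x,y;u(\cd))-\l^\top c=J_0(t,x,y;u(\cd))+\sum_{i=1}^k\l_i\big(J_i(t,x,y;u(\cd))-c_i\big)\les J_0(t,x,y;u(\cd)),$$
hence $\f(\l)\les V(t,x,y)$ throughout $\G$.

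For strong duality I would invoke the standard Lagrange duality theorem for convex programs (e.g. \cite{Kurdila-Zabarankin 2005}), which under Slater's condition yields a multiplier $\l^*\ges0$ with $\f(\l^*)=V(t,x,y)$; here Slater's condition is exactly the existence of a strictly feasible control. Concretely, this $\l^*$ arises from a geometric Hahn--Banach separation performed in the finite-dimensional space $\dbR^{k+1}$: the set $\cA$ of all $(r_0,\ldots,r_k)$ for which some $u(\cd)\in\cU(t,x,y)$ satisfies $J_0(t,x,y;u(\cd))\les r_0$ and $J_i(t,x,y;u(\cd))-c_i\les r_i$ $(i=1,\ldots,k)$ is convex, while the point $(V(t,x,y)-\e,0,\ldots,0)$ lies outside it. A separating hyperplane supplies a nonzero, componentwise nonnegative normal $(\m_0,\ldots,\m_k)$; strict feasibility forces $\m_0>0$, and $\l^*=(\m_1,\ldots,\m_k)/\m_0\in\G$ does the job. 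By weak duality $\f(\l^*)=V(t,x,y)\ges\f(\l)$ for every $\l\in\G$, so $\l^*$ is automatically a maximizer of $\f$; alternatively, since a strictly feasible $\bar u(\cd)$ with $J_i(t,x,y;\bar u(\cd))-c_i\les-\d<0$ gives $\f(\l)\les J_0(t,x,y;\bar u(\cd))-\d\sum_{i=1}^k\l_i\to-\i$ as $|\l|\to\i$, the concave upper semicontinuous $\f$ attains its maximum over $\G$ on a compact set.

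Finally I would identify the optimal control. Since a strictly feasible control exists, the feasible set is nonempty, so by Theorem \ref{thm:uni+exi} Problem (CLQ) has a unique optimal control $u^\circ(\cd)$. Strong duality then gives the chain
$$V(t,x,y)=\f(\l^*)\les J_0(t,x,y;u^\circ(\cd))+\sum_{i=1}^k\l_i^*\big(J_i(t,x,y;u^\circ(\cd))-c_i\big)\les J_0(t,x,y;u^\circ(\cd))=V(t,x,y),$$
in which every inequality must be an equality. Equality in the first place forces $u^\circ(\cd)$ to minimize $J(\l^*,t,x,y;\cd)$ over $\cU(t,x,y)$, so $u^\circ(\cd)=u^*(\l^*,\cd)$ by the uniqueness from the first step; equality in the second place yields the complementary slackness $\sum_{i=1}^k\l_i^*\big(J_i(t,x,y;u^\circ(\cd))-c_i\big)=0$. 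I expect the genuine obstacle to be the strong-duality step, namely verifying convexity of $\cA$ and excluding $\m_0=0$ through Slater's condition, whereas the solvability of (CLQ*) and the final identification are routine once the zero duality gap is established.
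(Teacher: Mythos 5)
Your proposal is correct and takes essentially the same route as the paper: the paper settles the first assertion by the argument of Theorem \ref{thm:uni+exi} (exactly as you do) and disposes of the second by citing the Lagrange duality theorem \cite[Theorem 1, page 224]{Luenberger 1969}. Your weak-duality estimate, the separating-hyperplane argument in $\dbR^{k+1}$ under Slater's condition, and the complementary-slackness identification are precisely the standard proof of that cited theorem, so you have simply unpacked the black box the paper invokes rather than followed a different path.
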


\begin{proof}
The first assertion can be proved by a similar argument used in the proof of Theorem \ref{thm:uni+exi},
and the second assertion follows from the Lagrange duality theorem \cite[Theorem 1, page 224]{Luenberger 1969}.
\end{proof}

Once we find out the optimal control of Problem (CLQ*) and derive the value function $V(\l,t,x,y)$,
we shall be able to calculate the gradient of the dual functional \rf{dual-f} and solve the original
Problem (CLQ). In order to obtain an explicit representation of the optimal control for Problem (CLQ*),
we adopt the penalty approach, in which Problem (CLQ*) is approximated by a sequence of standard LQ
problems where the terminal states are unconstrained.

\ms

Let $Q(\cd)\in L^1(0,T;\dbS^n)$ and $R(\cd)\in L^\i(0,T;\dbS^m)$ be such that \rf{QR>0} holds.
For each $\l\in\G$, the matrices in the cost function \rf{J(lamda)} have the same properties
as $Q(\cd)$ and $R(\cd)$. So in what follows we shall simply consider Problem (CLQ*) with the
cost functional
$$ J(t,x,y;u(\cd)) = \int_t^T\[\lan Q(s)X(s),X(s)\ran+\lan R(s)u(s),u(s)\ran\]ds, $$
and the corresponding value function will be denoted by $V(t,x,y)$.
For every integer $i\ges1$ let
\bel{J_i}J_i(t,x,y;u(\cd))=i|X(T)-y|^2+\int_t^T\[\lan Q(s)X(s),X(s)\ran+\lan R(s)u(s),u(s)\ran\]ds.\ee
The family of standard LQ problems, parameterized by $i$, is defined as follows.

\ms

\bf Problem (LQ)$_i$. \rm For any given $(t,x,y)\in[0,T)\times\dbR^n\times\dbR^n$,
find a $u_i^*(\cd)\in\cU[t,T]$ such that
$$J_i(t,x,y;u_i^*(\cd))=\inf_{u(\cd)\in\cU[t,T]}J_i(t,x,y;u(\cd))\deq V_i(t,x,y).$$

\ss

The solution of the above Problem (LQ)$_i$ can be obtained by using a completion-of-squares
technique via the Riccati equation
\bel{Ric-P_i}\left\{\2n\ba{ll}
\ds\dot P_i(s)+P_i(s) A(s)+A(s)^\top P_i(s)+Q(s)-P_i(s) B(s)R(s)^{-1}B(s)^\top P_i(s)=0,\qq s\in[0,T],\\
\ns\ds P_i(T)=i I,\ea\right.\ee
see, e.g., \cite{Yong-Zhou 1999} for a thorough study of the Riccati approach
(see also \cite{Sun-Li-Yong 2016} for some new developments). More precisely,
let $P_i(\cd)\in C([0,T];\dbS^n)$ be the unique solution of \rf{Ric-P_i},
and let $\eta_i(\cd)\in C([0,T];\dbR^n)$ be the solution of
\bel{eta_i}\left\{\2n\ba{ll}
\ds\dot\eta_i(s)=-\big[A(s)-B(s)R(s)^{-1}B(s)^\top P_i(s)\big]^\top\eta_i(s),\qq s\in[0,T],\\
\ns\ds\eta_i(T)=-i y.\ea\right.\ee
The unique optimal control $u_i^*(\cd)$ of Problem (LQ)$_i$ (for $(t,x,y)$) is given by the
following state feedback form:
\bel{u*_i}u^*_i(s)=-R(s)^{-1}B(s)^\top\big[P_i(s)X^*_i(s)+\eta_i(s)\big],\qq s\in[t,T],\ee
where $X_i^*(\cd)$ is the solution to the {\it closed-loop} system
\bel{X_i*}\left\{\2n\ba{ll}
\ds\dot X^*_i(s)=\big[A(s)-B(s)R(s)^{-1}B(s)^\top P_i(s)\big]X^*_i(s)
-B(s)R(s)^{-1}B(s)^\top\eta_i(s),\qq s\in[t,T],\\
\ns\ds X^*_i(t)=x.\ea\right.\ee
Moreover, the value function of Problem (LQ)$_i$ has the following representation:
$$V_i(t,x,y)=\lan P_i(t)x,x\ran+2\lan\eta_i(t),x\ran+i|y|^2
-\int_t^T\blan R(s)^{-1}B(s)^\top\eta_i(s),B(s)^\top\eta_i(s)\bran ds.$$
In particular, if $y=0$, the solution $\eta_i(\cd)$ of \rf{eta_i} is identically zero, and
$$V_i(t,x,0)=\lan P_i(t)x,x\ran,\qq\forall (t,x)\in[0,T]\times\dbR^n.$$
Because the cost functional is nonnegative and the weight on the square of the terminal state
is positive, it is not difficult to see by contradiction that $P_i(t)>0$ for all $t\in[0,T]$.

\ms

Note that $J_i(t,x,y;u(\cd))$ is nondecreasing in $i$. Hence, when the system $[A,B]$ is
completely controllable, it is expected that the sequence $\{u^*_i(\cd)\}^\i_{i=1}$ defined
by \rf{u*_i} converges to the unique optimal control of Problem (CLQ*) for the initial pair
$(t,x)$ and target $y$. Actually, we have the following result.

\begin{theorem}\label{thm:lim-LQi}\sl
Let {\rm(H1)--(H2)} hold. For $(t,x,y)\in[0,T)\times\dbR^n\times\dbR^n$,
let $(u_i^*(\cd),X^*_i(\cd))$ be the corresponding optimal pair of Problem {\rm(LQ)$_i$}.
We have the following:
\ben[\indent\rm(i)]
\item $V_i(t,x,y)\uparrow V(t,x,y)$ as $i\to\i$.
\item $\{u_i^*(\cd)\}^\i_{i=1}$ has a subsequence converging weakly to the unique optimal
control of Problem {\rm(CLQ*)} with respect to $(t,x,y)$.
\een\end{theorem}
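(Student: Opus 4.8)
The plan is to establish both claims at once through a single weak-compactness argument, with the value-function convergence in (i) falling out of the optimality of the weak limit produced for (ii).

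First I would record the elementary half of (i). Since $J_i(t,x,y;u(\cd))\les J_{i+1}(t,x,y;u(\cd))$ for every control, taking infima gives $V_i(t,x,y)\les V_{i+1}(t,x,y)$, so the sequence is nondecreasing. For an upper bound, note that any feasible control $u(\cd)\in\cU(t,x,y)$ satisfies $X(T)=y$, which annihilates the penalty term; hence $V_i(t,x,y)\les J_i(t,x,y;u(\cd))=J(t,x,y;u(\cd))$, and taking the infimum over $\cU(t,x,y)$ yields $V_i(t,x,y)\les V(t,x,y)$ for every $i$. Thus $V_i(t,x,y)$ increases to some limit $V_\i\les V(t,x,y)$.

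Next I would exploit the optimal pairs to extract a weakly convergent subsequence. Writing $V_i(t,x,y)=i|X_i^*(T)-y|^2+J(t,x,y;u_i^*(\cd))$ and using $V_i\les V(t,x,y)$, both summands are controlled: $J(t,x,y;u_i^*(\cd))\les V(t,x,y)$ and $|X_i^*(T)-y|^2\les V(t,x,y)/i\to0$. The coercivity $R(\cd)\gg0$ (so $R(\cd)\ges\d I$ for some $\d>0$) then gives $\d\int_t^T|u_i^*(s)|^2ds\les J(t,x,y;u_i^*(\cd))\les V(t,x,y)$, so $\{u_i^*(\cd)\}$ is bounded in $L^2(t,T;\dbR^m)$, and a subsequence $u_{i_j}^*(\cd)$ converges weakly to some $u^*(\cd)$.

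The crux is verifying that this weak limit is \emph{feasible} for (CLQ*), i.e. that the terminal constraint survives the limit. The terminal-state map $u(\cd)\mapsto X(T;t,x,u(\cd))$ is affine with bounded linear part, since the variation-of-constants formula expresses it through the $L^2$-bounded integral operator $u\mapsto\int_t^T\F_A(T)\F_A(s)^{-1}B(s)u(s)ds$; it is therefore weakly continuous, so $X(T;t,x,u_{i_j}^*(\cd))\to X(T;t,x,u^*(\cd))$. Since $X_{i_j}^*(T)=X(T;t,x,u_{i_j}^*(\cd))\to y$ by the penalty estimate above, we conclude $X(T;t,x,u^*(\cd))=y$, that is, $u^*(\cd)\in\cU(t,x,y)$. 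Finally, $u(\cd)\mapsto J(t,x,y;u(\cd))$ is convex and continuous, hence sequentially weakly lower semicontinuous, so
$$V(t,x,y)\les J(t,x,y;u^*(\cd))\les\liminf_{j\to\i}J(t,x,y;u_{i_j}^*(\cd))\les\liminf_{j\to\i}V_{i_j}(t,x,y)=V_\i\les V(t,x,y).$$
Every inequality is forced to be an equality, which yields $V_\i=V(t,x,y)$, proving (i), and shows $u^*(\cd)$ attains $V(t,x,y)$ over $\cU(t,x,y)$; by the strict convexity of $J$ this minimizer is unique, so $u^*(\cd)$ is the optimal control of (CLQ*) and (ii) follows. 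The main obstacle is precisely the feasibility step: one must couple the weak continuity of the terminal-state map with the $O(1/i)$ penalty estimate to force $X(T)=y$ in the limit, and it is exactly the coercivity $R(\cd)\gg0$ (inherited from $R_0(\cd)\gg0$) that supplies the $L^2$ bound making weak compactness available.
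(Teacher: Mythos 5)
Your proof is correct and follows essentially the same route as the paper's: the $O(1/i)$ penalty estimate forcing $X_i^*(T)\to y$, the coercivity bound from $R(\cd)\gg0$ giving weak compactness of $\{u_i^*(\cd)\}$, and sequential weak lower semicontinuity of $J$ closing the chain of inequalities. The only minor variations are that you establish feasibility of the weak limit via weak continuity of the affine terminal-state map, where the paper reaches the same conclusion through Mazur's lemma applied to convex combinations, and that you rederive the existence and uniqueness of the optimal control of Problem (CLQ*) from strict convexity rather than citing Theorem \ref{thm:duality}.
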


\begin{proof}
We have seen in Theorem \ref{thm:duality} that Problem (CLQ*) is uniquely solvable.
Let $u^*(\cd)\in\cU(t,x,y)$ be the unique optimal control of Problem (CLQ*) with respect to $(t,x,y)$,
and let $X^*(\cd)$ be the corresponding optimal trajectory.
Since $Q(\cd),R(\cd)\ges0$ and $X^*(T)=y$, we have
\begin{eqnarray}
i|X_i^*(T)-y|^2 \4n&\les&\4n J_i(t,x,y;u_i^*(\cd))=V_i(t,x,y),\nonumber\\
\label{V_i<V} V_i(t,x,y) \4n&\les&\4n J_i(t,x,y;u^*(\cd))=J(t,x,y;u^*(\cd))=V(t,x,y),
\end{eqnarray}
from which we conclude that
$$\lim_{i\to\i}X_i^*(T)=y.$$
On the other hand, since $Q(\cd)\ges0$ and $R(\cd)\gg0$, there exists a $\d>0$ such that
$$J_i(t,x,y;u(\cd))\ges\d\int_t^T|u(s)|^2ds,\qq\forall\, u(\cd)\in L^2(t,T;\dbR^m),$$
which, together with \rf{V_i<V}, yields
$$\int_t^T|u_i^*(s)|^2ds\les \d^{-1}J_i(t,x,y;u_i^*(s))=\d^{-1}V_i(t,x,y)
\les\d^{-1}V(t,x,y)<\i,\qq\forall\,i\ges1.$$
Thus, $\{u_i^*(\cd)\}^\i_{i=1}$ is bounded in the Hilbert space $L^2(t,T;\dbR^m)$ and hence
admits a weakly convergent subsequence $\{u_{i_k}^*(\cd)\}_{k=1}^\i$. Let $v(\cd)$ be the weak
limit of $\{u_{i_k}^*(\cd)\}_{k=1}^\i$. The sequential weak lower semicontinuity of the mapping
$u(\cd)\mapsto J(t,x,y;u(\cd))$ gives
\bel{J(v)<V}\ba{lll}
\ds J(t,x,y;v(\cd))\4n&\les&\4n\ds \liminf_{k\to\i}J(t,x,y;u_{i_k}^*(\cd))
\les\liminf_{k\to\i}J_{i_k}(t,x,y;u_{i_k}^*(\cd))\\
\ns\4n&=&\4n\ds \lim_{k\to\i}V_{i_k}(t,x,y)\les V(t,x,y).\ea\ee
The above inequality will imply that $v(\cd)$ coincides with the unique optimal control
$u^*(\cd)$ of Problem (CLQ*) with respect to $(t,x,y)$ once we prove $v(\cd)\in\cU(t,x,y)$.
Define a continuous, convex mapping $\sL:L^2(t,T;\dbR^m)\to \dbR^n$ by the following:
$$\sL(u(\cd))=X(T;t,x,u(\cd)),$$
where $X(\cd\,;t,x,u(\cd))$ is the solution to the state equation \rf{state} corresponding
to $u(\cd)$ and $(t,x)$. By Mazur's lemma, one can find $\a_{kj}\in[0,1],~j=1,2\cdots,N_k$
with $\sum_{j=1}^{N_k}\a_{kj}=1$ such that $\sum_{j=1}^{N_k}\a_{kj}u_{i_{k+j}}^*(\cd)$
converges strongly to $v(\cd)$ as $k\to\i$. Thus,
\begin{eqnarray*}
X(T;t,x,v(\cd)) \4n&=&\4n \sL(v(\cd))=\lim_{k\to\i}\sL\lt(\sum_{j=1}^{N_k}\a_{kj}u_{i_{k+j}}^*(\cd)\rt)\\
\4n&=&\4n \lim_{k\to\i}\sum_{j=1}^{N_k}\a_{kj}\sL(u_{i_{k+j}}^*(\cd))
=\lim_{k\to\i}\sum_{j=1}^{N_k}\a_{kj}X_{i_{k+j}}^*(T)=y.
\end{eqnarray*}
This shows $v(\cd)\in\cU(t,x,y)$, and hence (ii) holds. Now \rf{J(v)<V} yields
$$V(t,x,y)=J(t,x,y;v(\cd))\les\lim_{k\to\i}V_{i_k}(t,x,y)\les V(t,x,y),$$
and (i) follows readily.
\end{proof}

\section{Riccati equation}

The aim of this section is to investigate the existence and uniqueness of solutions to
the Riccati equations \rf{Ric:P} and \rf{Ric:Pi}. We will focus mainly on \rf{Ric:P} as
the well-posedness of the Riccati equation \rf{Ric:Pi} can be obtained by a simple
time-reversal on the result for \rf{Ric:P}.

\ms

First, we present the following result concerning the uniqueness of solutions to the
Riccati equation \rf{Ric:P}.

\begin{proposition}\sl
Let {\rm(H1)} hold. Then the Riccati equation \rf{Ric:P} has at most one solution
$P(\cd)\in C([0,T);\dbS^n)$.
\end{proposition}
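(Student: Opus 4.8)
The plan is to show that any two symmetric solutions coincide by transporting their difference along the two closed-loop flows. Suppose $P_1(\cd)$ and $P_2(\cd)$ are both solutions of \rf{Ric:P} in $C([0,T);\dbS^n)$, write $\Sigma(s)\deq B(s)R(s)^{-1}B(s)^\top\ges0$ for the (symmetric) closed-loop weight, and set $\Delta(\cd)=P_1(\cd)-P_2(\cd)$. Subtracting the two copies of \rf{Ric:P} and using the elementary identity $P_1\Sigma P_1-P_2\Sigma P_2=P_1\Sigma\Delta+\Delta\Sigma P_2$, one finds that $\Delta$ obeys the \emph{linear} matrix ODE $\dot\Delta=-A_1^\top\Delta-\Delta A_2$ on $[0,T)$, where $A_i(s)=A(s)-\Sigma(s)P_i(s)$ is the closed-loop coefficient attached to $P_i$ (and $A_i^\top=A^\top-P_i\Sigma$ since $P_i,\Sigma$ are symmetric).

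First I would bring in, for $i=1,2$, the closed-loop fundamental matrix $\F_i(\cd)$ solving $\dot\F_i=A_i\F_i$, $\F_i(0)=I$; this is exactly \rf{Phi} written for $P_i$, and each $\F_i$ is invertible on $[0,T)$. The key algebraic observation is that $M(s)\deq\F_1(s)^\top\Delta(s)\F_2(s)$ is \emph{constant}: differentiating and substituting $\dot\F_1^{\,\top}=\F_1^\top A_1^\top$, $\dot\F_2=A_2\F_2$, together with the equation for $\dot\Delta$, the three resulting terms cancel identically, so $\dot M\equiv0$. Consequently $\Delta(s)=\F_1(s)^{-\top}M\F_2(s)^{-1}$ for a single fixed matrix $M$, and the whole proposition reduces to proving $M=0$.

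To pin down $M$ I would let $s\to T$ and exploit the two preceding propositions. Their proofs use only $Q\ges0$, $R\gg0$, and the terminal blow-up $\lim_{s\to T}\min\si(P_i(s))=\i$, so they apply verbatim to each symmetric solution $P_i$ without any appeal to positive definiteness; I would record this routine point explicitly, since the present statement assumes only $P\in C([0,T);\dbS^n)$. Splitting $M=\F_1^\top P_1\F_2-\F_1^\top P_2\F_2=(P_1\F_1)^\top\F_2-\F_1^\top(P_2\F_2)$, Proposition \ref{prop:P-1} gives $\F_i(s)\to0$ as $s\to T$, while the proposition immediately following it (which shows $\lim_{s\to T}P(s)\F(s)=\Psi(T)$) guarantees that $P_i(s)\F_i(s)$ converges, hence stays bounded. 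Therefore each of the two products tends to $0$, so $M=\lim_{s\to T}M(s)=0$. Since $\F_1(s),\F_2(s)$ are invertible, $\Delta(s)=\F_1(s)^{-\top}M\F_2(s)^{-1}=0$ for every $s\in[0,T)$, i.e.\ $P_1\equiv P_2$.

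The main obstacle is the terminal boundary. A naive comparison would want a finite terminal value to close a Gronwall estimate on $\Delta$, but here both solutions explode, and the limit $\lim_{s\to T}M(s)$ is an indeterminate $0\cd\infty\cd0$ form. The entire argument hinges on resolving this through the factorization $M=(P_1\F_1)^\top\F_2-\F_1^\top(P_2\F_2)$, which absorbs the blow-up of $P_i$ into the bounded quantity $P_i\F_i$ while the companion factor $\F_j$ vanishes — precisely the information that Proposition \ref{prop:P-1} and its corollary were set up to deliver. Everything else (the subtraction, the cancellation giving $\dot M\equiv0$, and the invertibility of the $\F_i$) is routine linear-ODE bookkeeping.
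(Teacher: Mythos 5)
Your proof is correct, but it follows a genuinely different route from the paper's. The paper exploits the blow-up condition directly: since $\min\si(P_j(s))\to\i$, both solutions are positive definite on some $[\t,T)$, so it sets $\Si_j=P_j^{-1}$ there, extends by $\Si_j(T)=0$, observes that both inverses solve the dual Riccati equation $\dot\Si-A\Si-\Si A^\top-\Si Q\Si+BR^{-1}B^\top=0$ with the \emph{regular} terminal condition $\Si(T)=0$, and then applies Gronwall twice --- once to $\Si_1-\Si_2$ on $[\t,T]$ and once more to $P_1-P_2$ backward on $[0,\t]$. Your argument instead works globally on $[0,T)$ in one step: the conserved quantity $M=\F_1^\top(P_1-P_2)\F_2$ (your computation that $\dot M\equiv0$ checks out, as does the identity $P_1\Si P_1-P_2\Si P_2=P_1\Si\D+\D\Si P_2$), identified as $0$ at the terminal time via the factorization $(P_1\F_1)^\top\F_2-\F_1^\top(P_2\F_2)$ together with $\F_j(s)\to0$ and the convergence of $P_j(s)\F_j(s)$. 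You are also right, and it is important that you flagged it, that the proofs of Proposition \ref{prop:P-1} and of the proposition following it use only $Q\ges0$, $R\gg0$, symmetry, and the blow-up condition --- the inequality $\l_s|\F(s)x|^2\les\lan P(s)\F(s)x,\F(s)x\ran$ and the ODE identity for $P\F$ never need $P(s)>0$ away from $T$ --- so they apply to each merely symmetric solution, and neither proof invokes (H2) or any prior uniqueness of the Riccati equation (only uniqueness for the linear ODE \rf{Psi}), so there is no circularity and no hypothesis mismatch with the present statement, which assumes (H1) alone. As for trade-offs: the paper's proof is self-contained within its section and uses nothing beyond Gronwall's inequality, converting the singular terminal condition into a well-posed terminal-value problem for the inverse; yours avoids inverting $P_j$ altogether and replaces the two-step patching at $\t$ by a single conjugation invariant, at the price of importing (and slightly strengthening the hypotheses of) the two structural propositions about the closed-loop flow.
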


\begin{proof}
Suppose that $P_1(\cd),P_2(\cd)\in C([0,T);\dbS^n)$ are two solutions of \rf{Ric:P}.
Take $\t\in[0,T)$ such that $P_1(s),P_2(s)>0$ on $[\t,T)$, and set for $i=1,2$,
$$\Si_i(s)=\left\{\2n\ba{ll}
\ds P_i(s)^{-1},&~ s\in[\t,T),\\
\ns\ds 0,&~ s=T.\ea\right.$$
By evaluating ${d\over ds}[P_i(s)\Si_i(s)]=0$, we see that both $\Si_1(\cd)$ and $\Si_2(\cd)$
solve the following ODE:
$$\left\{\2n\ba{ll}
\ds\dot\Si-A\Si-\Si A^\top-\Si Q\Si+BR^{-1}B^\top=0,\qq s\in[0,T],\\
\ns\ds \Si(T)=0.\ea\right.$$
Thus, $\Pi(\cd)\deq\Si_1(\cd)-\Si_2(\cd)$ satisfies $\Pi(T)=0$ and
\begin{eqnarray*}
\dot\Pi&\3n=&\3n A\Pi+\Pi A^\top+\Si_1 Q\Si_1-\Si_2 Q\Si_2\\
&\3n=&\3n A\Pi+\Pi A^\top+\Pi Q\Si_1+\Si_2 Q\Pi\\
&\3n=&\3n (A+\Si_2 Q)\Pi+\Pi (A^\top+Q\Si_1)
\end{eqnarray*}
on $[\t,T]$. By a standard argument using Gronwall's inequality we obtain $\Pi(s)=0$ for all
$s\in[\t,T]$. This shows $P_1(\cd)=P_2(\cd)$ on $[\t,T]$. Now let $\G(\cd)=P_1(\cd)-P_2(\cd)$.
Then $\G(\t)=0$ and
\begin{eqnarray*}
0&\3n=&\3n \dot{\G}+\G A+A^\top\G-P_1BR^{-1}B^\top P_1+P_2BR^{-1}B^\top P_2\\
&\3n=&\3n \dot{\G}+\G A+A^\top\G-\G BR^{-1}B^\top P_1-P_2BR^{-1}B^\top\G\\
&\3n=&\3n \dot{\G}+\G (A-BR^{-1}B^\top P_1)+(A^\top-P_2BR^{-1}B^\top)\G
\end{eqnarray*}
on $[0,\t]$. Again by Gronwall's inequality we obtain $P_1(\cd)=P_2(\cd)$ on $[0,\t]$.
\end{proof}

Next we prove the existence of solutions to the Riccati equation \rf{Ric:P}.
The basic idea is to pass to the limit in \rf{Ric-P_i}. Theorem \ref{thm:lim-LQi}
will guarantee the existence of the limit $P(s)\equiv\lim_{i\to\i}P_i(s)$,
which is a solution of \rf{Ric:P}.

\begin{theorem}\label{thm:exi-Ric}\sl
Let {\rm(H1)--(H2)} hold. Then the Riccati equation \rf{Ric:P}
admits a unique solution $P(\cd)\in C([0,T);\dbS_+^n)$. Moreover,
\bel{V(t,x,0)}V(t,x,0)\deq\inf_{u(\cd)\in\cU(t,x,0)}J(t,x,0;u(\cd))=\lan P(t)x,x\ran,
\qq\forall\, (t,x)\in[0,T)\times\dbR^n.\ee
\end{theorem}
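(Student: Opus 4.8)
The uniqueness having been settled by the preceding proposition, the plan is to construct the solution as a limit of the solutions $P_i(\cd)$ of the penalized Riccati equations \rf{Ric-P_i}, reading off the representation \rf{V(t,x,0)} along the way. First I would record the monotonicity $P_i(t)\les P_{i+1}(t)$ for every $t\in[0,T]$: since $J_i(t,x,0;u(\cd))$ is nondecreasing in $i$, so is $V_i(t,x,0)=\lan P_i(t)x,x\ran$, and comparing the quadratic forms over all $x$ yields the matrix inequality. Because $[A,B]$ is completely controllable, $\cU(t,x,0)$ is nonempty and hence $V(t,x,0)<\i$ for every $(t,x)$ with $t<T$; invoking Theorem \ref{thm:lim-LQi}(i), $\lan P_i(t)x,x\ran=V_i(t,x,0)\uparrow V(t,x,0)<\i$. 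Thus for each fixed $t\in[0,T)$ these bounded monotone quadratic forms converge, and by polarization $P_i(t)$ converges entrywise to a symmetric matrix $P(t)$ with $\lan P(t)x,x\ran=V(t,x,0)$; this already gives \rf{V(t,x,0)}. Moreover $P(t)\ges P_1(t)>0$, so $P(t)\in\dbS^n_+$.

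The difficulty is that $P_i(t)$ explodes as $t\to T$, so one cannot pass to the limit in \rf{Ric-P_i} directly. The device I would use is to invert: set $\Si_i(\cd)=P_i(\cd)^{-1}$, which is legitimate since $P_i>0$ on $[0,T]$. Exactly as in the uniqueness proof, $\Si_i$ solves the backward equation
$$\dot\Si_i-A\Si_i-\Si_i A^\top-\Si_i Q\Si_i+BR^{-1}B^\top=0,\qq \Si_i(T)=\frac1i I.$$
The monotonicity $P_1\les P_i\les P_{i+1}$ inverts to $0<\Si_{i+1}\les\Si_i\les\Si_1$, so the $\Si_i$ are uniformly bounded on $[0,T]$ by the continuous function $\Si_1=P_1^{-1}$. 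With this uniform bound and the integrability hypotheses in (H1) (note $BR^{-1}B^\top\in L^1$ since $B\in L^2$ and $R^{-1}\in L^\i$), the right-hand side of the $\Si_i$-equation is dominated by a single $L^1$ function independent of $i$; hence $\{\Si_i\}$ is equi-absolutely-continuous. By Arzel\`a--Ascoli together with the pointwise monotone convergence, the whole sequence converges uniformly on $[0,T]$ to a continuous limit $\Si(\cd)$ which, passing to the limit in the integral form, solves the same backward equation with $\Si(T)=0$.

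It remains to invert back. For $t\in[0,T)$ one has $\Si(t)=\lim_i P_i(t)^{-1}=P(t)^{-1}>0$, so $P(\cd)\deq\Si(\cd)^{-1}$ is well defined, continuous and positive definite on $[0,T)$; differentiating $P\Si=I$ and substituting the $\Si$-equation recovers precisely the Riccati equation in \rf{Ric:P}. The singular terminal behaviour is then automatic: since $\Si$ is continuous with $\Si(T)=0$, we get $\min\si(P(s))=1/\max\si(\Si(s))\to\i$ as $s\to T$, which is the boundary condition $\lim_{s\to T}\min\si(P(s))=\i$ in \rf{Ric:P}.

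I expect the genuinely delicate point to be the uniform convergence of the $\Si_i$ and the verification that the limit both is continuous up to $T$ and satisfies the equation. Passing to the inverses is exactly what tames the blow-up, turning the singular terminal condition $\min\si(P(s))\to\i$ into the benign condition $\Si(T)=0$; and the positivity $\Si(t)>0$ on $[0,T)$ that permits inverting back to $P$ is precisely where complete controllability — through the finiteness of $V(t,x,0)$ — is indispensable.
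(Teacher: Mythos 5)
Your proof is correct, but the existence step takes a genuinely different route from the paper's. The paper also starts from the penalized solutions $P_i(\cd)$ of \rf{Ric-P_i} and uses Theorem \ref{thm:lim-LQi}(i) to get the monotone pointwise limit $P(t)$ together with the representation \rf{V(t,x,0)}, exactly as you do; but it then stays with the $P_i$ themselves: it constructs an explicit feasible control from the controllability Gramian, $v(s)=\dbV(s,t)x$, to produce a \emph{continuous} majorant $M(\cd)$ with $P_i(t)\les M(t)$ on $[0,T)$, and uses this locally uniform bound to pass to the limit (by dominated convergence) in the integral form of the equation \rf{Ric-P_i}; the blow-up condition is then read off from $P(t)\ges P_i(t)$ and $P_i(T)=iI$. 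You instead invert, setting $\Si_i=P_i^{-1}$, so that monotonicity of the $P_i$ gives the \emph{global} uniform bound $0<\Si_i\les\Si_1=P_1^{-1}$ on all of $[0,T]$; equi-absolute continuity (via the $L^1$ domination coming from (H1)) plus Arzel\`a--Ascoli then yields uniform convergence to a continuous $\Si$ with $\Si(T)=0$ solving the inverted equation, and inverting back on $[0,T)$ recovers \rf{Ric:P}, with the singular terminal condition falling out automatically from $\Si(T)=0$. Your route buys something concrete: it avoids the Gramian construction entirely (complete controllability enters only through the finiteness of $V(t,x,0)$, which guarantees $\Si(t)>0$ for $t<T$), it keeps all estimates on the compact interval $[0,T]$, and it converts the singular boundary condition into a benign zero terminal condition --- in effect extending to the existence proof the same inversion device the paper uses only for uniqueness. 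What the paper's route buys is directness (no need to invert back, no Arzel\`a--Ascoli) and an explicit quantitative bound $M(t)$ expressed in terms of the controllability Gramian. Two small points you should make explicit in a polished write-up: the limit $\Si$ is continuous and the convergence is uniform because equicontinuity plus pointwise convergence on a compact set gives uniform convergence (monotonicity alone does not); and on compact subintervals of $[0,T)$ one has $\Si\ges\e I$ for some $\e>0$, which is what justifies differentiating $P\Si=I$ a.e.\ in the Carath\'eodory sense to recover the Riccati equation.
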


\begin{proof}
Consider Problem (LQ)$_i$ with $y=0$. For $i\ges1$, let $P_i(\cd)\in C([0,T];\dbS_+^n)$
be the solution to \rf{Ric-P_i}. Note that in the case of $y=0$, the solution $\eta_i(\cd)$
of \rf{eta_i} is identically zero, and the value function of Problem (LQ)$_i$ is given by
$$V_i(t,x,0)=\lan P_i(t)x,x\ran,\qq (t,x)\in[0,T]\times\dbR^n.$$
Then from Theorem \ref{thm:lim-LQi} (i), we see that for any $t\in[0,T)$, $\{P_i(t)\}^\i_{i=1}$ is an
increasing, bounded sequence, and hence has a limit $P(t)\in\dbS_+^n$ having the property \rf{V(t,x,0)}.
On the other hand, one can easily verify that the control defined by
$$v(s)=-\big[\F_A(s)^{-1}B(s)\big]^\top\lt(\int_t^T\F_A(r)^{-1}B(r)\big[\F_A(r)^{-1}B(r)\big]^\top dr\rt)^{-1}
\F_A(t)^{-1}x\equiv\dbV(s,t)x,\q s\in[t,T]$$
is in $\cU(t,x,0)$, where $\F_A(\cd)$ is the solution of \rf{F_A}.
Thus, with $\dbX(\cd\,,t)$ denoting the solution to the matrix-valued ODE
$$\lt\{\2n\ba{ll}
\ds\dot \dbX(s,t)=A(s)\dbX(s,t)+B(s)\dbV(s,t),\qq s\in[t,T],\\
\ns\ds \dbX(t,t)=I, \ea\rt.$$
we have $X(\cd\,;t,x,v(\cd))=\dbX(\cd\,,t)x$, and hence
\begin{eqnarray*}
\lan P_i(t)x,x\ran
\3n&=\3n& V_i(t,x,0)\les V(t,x,0)\les J(t,x,0;v(\cd))\\
\3n&=\3n& \int_t^T\[\lan Q(s)\dbX(s,t)x,\dbX(s,t)x\ran+\lan R(s)\dbV(s,t)x,\dbV(s,t)x\ran\]ds\\
\3n&\equiv\3n& \lan M(t)x,x\ran,\qq\forall\, t\in[0,T),~\forall\, x\in\dbR^n.
\end{eqnarray*}
Noting that $\dbX(s,t)$ and $\dbV(s,t)$ are continuous functions of $(s,t)$,
we conclude that the function $M(\cd)$ is continuous in $[0,T)$. Hence, $\{P_i(t)\}^\i_{i=1}$
is uniformly bounded on compact subintervals of $[0,T)$, and by the dominated convergence theorem,
we have for any $t\in[0,T)$,
\begin{eqnarray*}
P(t)\3n&=\3n& \lim_{i\to\i}P_i(t)
=\lim_{i\to\i}\lt[P_i(0)-\int_0^t\(P_i A+A^\top P_i+Q-P_i BR^{-1}B^\top P_i\)ds\rt]\\
\3n&=\3n& P(0)-\int_0^t\(P A+A^\top P+Q-P BR^{-1}B^\top P\)ds.
\end{eqnarray*}
This implies that $P(\cd)$ satisfies the differential equation in \rf{Ric:P}.
Finally, since $P(t)\ges P_i(t)$ for all $i\ges1$ and all $t\in[0,T)$, we have
$$\lim_{t\to T}P(t)\ges\lim_{i\to\i}\lim_{t\to T}P_i(t)=\lim_{i\to\i}i I=\i.$$
The proof is completed.
\end{proof}

\begin{remark}\label{rmk:P_i}\rm
From the proof of Theorem \ref{thm:exi-Ric}, we have the following facts:
\begin{enumerate}[\indent\rm(i)]
\item The solution $P_i(t)$ of the Riccati equation \rf{Ric-P_i} is increasing in $i$
and converges to $P(t)$, the solution of the Riccati equation \rf{Ric:P}, for all $t\in[0,T)$
as $i\to\i$.
\item The sequence $\{P_i(t)\}^\i_{i=1}$ is uniformly bounded on compact subintervals of $[0,T)$.
\end{enumerate}
\end{remark}

To show the unique solvability of the Riccati equation \rf{Ric:P},
let us fix $t\in[0,T)$ and define for $t\les s\les T$,
$$\ba{lll}
\ds \bar A(s)=-A(T+t-s), \q& \bar B(s)=-B(T+t-s),\\
\ns \bar Q(s)=Q(T+t-s),  \q& \bar R(s)=R(T+t-s).
\ea$$
For $t\les r<T$, consider the controlled ODE
$$\lt\{\2n\ba{ll}
\ds \dot{\bar X}(s)=\bar A(s)\bar X(s)+\bar B(s)v(s),\qq s\in[r,T],\\
\ns\ds \bar X(r)=y, \ea\rt.$$
and the cost functional
$$\bar J(r,y,x;v(\cd))\deq\int_r^T\[\blan\bar Q(s)\bar X(s),\bar X(s)\bran+\blan\bar R(s)v(s),v(s)\bran\]ds.$$
Using the criterion \rf{regular}, it is not hard to show that system $[\bar A,\bar B]$
is completely controllable. Since $\bar Q(\cd)\ges0$ and $\bar R(\cd)\gg0$ on $[t,T]$,
we have by Theorem \ref{thm:exi-Ric} that the Riccati equation
$$\left\{\2n\ba{ll}
\ds \dot\Si(s)+\Si(s)\bar A(s)+\bar A(s)^\top\Si(s)+\bar Q(s)
-\Si(s)\bar B(s)\bar R(s)^{-1}\bar B(s)^\top\Si(s)=0,\qq s\in[t,T),\\
\ns \lim_{s\to T}\min\si(\Si(s))=\i \ea\right.$$
admits a unique solution $\Si(\cd)\in C([t,T);\dbS_+^n)$. For initial pair $(t,y)$ and target $x=0$,
let $v^*(\cd)$ be the corresponding optimal control of the above problem.
By Theorem \ref{thm:exi-Ric}, the corresponding value is
$$\bar V(t,y,0)\deq\inf_{v(\cd)\in\cU(t,y,0)}\bar J(t,y,0;v(\cd))=\lan\Si(t)y,y\ran.$$
By reversing time,
$$\t=T+t-s, \qq s\in[t,T],$$
we see that
$$u^*(s)\deq v^*(T+t-s),\qq s\in[t,T]$$
is the unique optimal control of Problem (CLQ*) for the initial pair $(t,0)$ and target $y$,
and that $\Pi(s)=\Si(T+t-s)$ is the unique solution to the Riccati equation \rf{Ric:Pi}.
This gives us the following result.

\begin{proposition}\label{prop:exi-Pi}\sl
Let {\rm(H1)--(H2)} hold. Then for any $t\in[0,T)$, the Riccati equation \rf{Ric:Pi}
admits a unique solution $\Pi(\cd)\in C((t,T];\dbS_+^n)$. Moreover,
$$V(t,0,y)\deq\inf_{u(\cd)\in\cU(t,0,y)}J(t,0,y;u(\cd))=\lan\Pi(T)y,y\ran,\qq \forall y\in\dbR^n.$$
\end{proposition}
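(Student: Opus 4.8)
The plan is to exploit the time-reversal transformation set up immediately before the statement, which turns the Riccati equation \rf{Ric:Pi}---whose singularity is at the \emph{left} endpoint $s=t$---into an equation of the type \rf{Ric:P} whose singularity is at the right endpoint, so that the already-proved Theorem \ref{thm:exi-Ric} can be invoked wholesale. With $\bar A,\bar B,\bar Q,\bar R$ as defined above and $\t=T+t-s$, the first step is to confirm that the reversed system $[\bar A,\bar B]$ is completely controllable on $[t,T]$. This is the only genuinely new verification, and it is immediate from the regularity criterion \rf{regular}: the fundamental solution of $[\bar A,\bar B]$ is an explicit reparametrization of $\F_A$, so the regularity condition for $[\bar A,\bar B]$ reduces to that for $[A,B]$.

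Since $\bar Q\ges0$ and $\bar R\gg0$ on $[t,T]$, Theorem \ref{thm:exi-Ric} then supplies a unique solution $\Si(\cd)\in C([t,T);\dbS^n_+)$ of the corresponding Riccati equation, with $\lim_{s\to T}\min\si(\Si(s))=\i$, together with the value identity $\bar V(t,y,0)=\lan\Si(t)y,y\ran$ for the reversed problem (initial state $y$ at time $t$, target $0$ at time $T$). I would next set $\Pi(s)\deq\Si(T+t-s)$ and verify directly that it solves \rf{Ric:Pi}: by the chain rule $\dot\Pi(s)=-\dot\Si(\t)$, and substituting $A(s)=-\bar A(\t)$, $B(s)=-\bar B(\t)$, $Q(s)=\bar Q(\t)$, $R(s)=\bar R(\t)$ into \rf{Ric:Pi} and multiplying through by $-1$ reproduces exactly the Riccati equation solved by $\Si$. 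The regularity $\Pi\in C((t,T];\dbS^n_+)$ and the boundary behavior $\lim_{s\to t}\min\si(\Pi(s))=\lim_{\t\to T}\min\si(\Si(\t))=\i$ transfer at once, and uniqueness of $\Pi$ is inherited from uniqueness of $\Si$.

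For the value-function formula I would exhibit the cost-preserving bijection $u(s)\leftrightarrow v(\t)=u(T+t-\t)$ between $\cU(t,0,y)$ and the admissible class of the reversed problem. Under $\t=T+t-s$ the trajectories correspond via $\bar X(\t)=X(T+t-\t)$, so that the boundary data $X(t)=0,\ X(T)=y$ match $\bar X(t)=y,\ \bar X(T)=0$, and a change of variables in the integral gives $J(t,0,y;u(\cd))=\bar J(t,y,0;v(\cd))$. Taking infima over the two admissible classes yields $V(t,0,y)=\bar V(t,y,0)=\lan\Si(t)y,y\ran=\lan\Pi(T)y,y\ran$, using $\Si(t)=\Pi(T)$.

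The step demanding the most care is the sign bookkeeping in the second paragraph: the reversal flips the sign of $A$ and $B$ but not of $Q$ and $R$, while the time derivative itself contributes a minus sign, and it is precisely the interplay of these signs that flips the sign of the quadratic term and of $Q$ between \rf{Ric:P} and \rf{Ric:Pi}. Once this is tracked correctly, the remainder is a direct transcription through the time reversal.
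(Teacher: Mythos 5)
Your proposal is correct and follows essentially the same route as the paper: the paper's own proof is exactly this time-reversal construction $\bar A,\bar B,\bar Q,\bar R$, the appeal to the criterion \rf{regular} for complete controllability of $[\bar A,\bar B]$, the application of Theorem \ref{thm:exi-Ric} to obtain $\Si(\cd)$, and the identification $\Pi(s)=\Si(T+t-s)$ with the value identity $V(t,0,y)=\bar V(t,y,0)=\lan\Si(t)y,y\ran=\lan\Pi(T)y,y\ran$. Your write-up merely makes explicit the sign bookkeeping and the cost-preserving bijection of controls that the paper leaves implicit.
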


\begin{proof}[\indent\textbf{Proof of Theorem {\rm\ref{thm:main-1}}}]
The proof follows directly from a combination of Theorem \ref{thm:exi-Ric}
and Proposition \ref{prop:exi-Pi}.
\end{proof}

%

\section{Proof of Theorem \ref{thm:main-2}}

In this section we prove the second main result of the paper, Theorems \ref{thm:main-2}.
Our proof requires some technical lemmas, which we establish first.

\begin{lemma}\label{lmm:1}\sl
Let $1<p<\i$ and let functions $f_n\in L^p$ converge almost everywhere
(or in measure) to a function $f$. Then, a necessary and sufficient condition for convergence of
$\{f_n\}$ to $f$ in the weak topology of $L^p$ is the boundedness of $\{f_n\}$ in the norm of $L^p$.
\end{lemma}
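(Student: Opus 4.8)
The plan is to prove this as a standard functional-analytic fact about $L^p$ spaces for $1<p<\infty$, treating the two implications separately. The necessity direction is essentially a triviality that follows from general Banach space theory, while the sufficiency direction is the real content and relies on the reflexivity of $L^p$ together with a uniqueness-of-limit argument.

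For the **necessity** direction, suppose $f_n \to f$ weakly in $L^p$. Every weakly convergent sequence in a normed space is norm-bounded; this is the uniform boundedness principle applied to the sequence $\{f_n\}$ viewed as functionals on the dual $L^{p'}$ (where $p' = p/(p-1)$). So boundedness of $\{\|f_n\|_p\}$ is immediate and requires no use of the almost-everywhere convergence hypothesis at all.

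For the **sufficiency** direction, assume $f_n \to f$ almost everywhere (or in measure) and $\sup_n \|f_n\|_p = M < \infty$. Since $1 < p < \infty$, the space $L^p$ is reflexive, so the bounded sequence $\{f_n\}$ has a weakly convergent subsequence $f_{n_k} \rightharpoonup g$ for some $g \in L^p$. The key step is to identify the weak limit: I would show $g = f$ almost everywhere, so that the limit does not depend on the subsequence, and then invoke the subsequence principle (a bounded sequence all of whose weakly convergent subsequences have the same limit converges weakly to that limit) to conclude $f_n \rightharpoonup f$.

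**The main obstacle** is identifying the weak limit $g$ with the pointwise (or in-measure) limit $f$; this is where the two modes of convergence must be reconciled. I would argue this by testing against indicator functions of sets of finite measure: for any measurable $E$ with $|E| < \infty$, Hölder's inequality shows $\mathbf{1}_E \in L^{p'}$, and I would show $\int_E f_{n_k}\,dx \to \int_E f\,dx$ by combining the in-measure convergence with the uniform $L^p$-bound to obtain uniform integrability of $\{f_{n_k}\}$ over $E$ (via a Vitali-type argument), which rules out escape of mass. Since weak convergence gives $\int_E f_{n_k}\,dx \to \int_E g\,dx$, matching the two limits forces $\int_E (f-g) = 0$ for all such $E$, whence $f = g$ a.e. The a.e.-convergence case reduces to the in-measure case on sets of finite measure by Egorov's theorem, so it suffices to treat convergence in measure.
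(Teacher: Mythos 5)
Your proof is correct, but it is worth noting that the paper does not actually prove this lemma at all: its entire ``proof'' is a citation to Bogachev, \emph{Measure Theory}, Vol.~I, p.~282. So your argument supplies content that the paper outsources, and it does so along the standard lines one would expect to find behind such a citation: necessity via the uniform boundedness principle (you are right that the pointwise hypothesis plays no role there), and sufficiency via reflexivity of $L^p$, extraction of a weakly convergent subsequence, identification of the weak limit with the a.e./in-measure limit by testing against indicators of finite-measure sets (H\"older gives uniform integrability on such sets, Vitali gives convergence of the integrals), and the subsequence principle. Three small details should be made explicit in a full write-up. First, you need $f\in L^p$ before the identification step; this follows from Fatou's lemma along an a.e.-convergent (sub)subsequence, giving $\|f\|_p\les\liminf_k\|f_{n_k}\|_p$. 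Second, to pass from $\int_E(f-g)=0$ for all finite-measure $E$ to $f=g$ a.e., use Chebyshev's inequality on $f-g\in L^p$: the sets $\{\pm(f-g)>\e\}$ have finite measure and are therefore admissible test sets. Third, the subsequence principle requires that \emph{every} subsequence of $\{f_n\}$ admit a weakly convergent further subsequence (this is where reflexivity enters, via Eberlein--\v{S}mulian) and that the limit-identification argument apply verbatim to each such subsequence; both hold here, since the a.e./in-measure convergence to $f$ passes to subsequences. With these points filled in, your proof is complete and is, for practical purposes, the proof hiding behind the paper's reference; in the paper's application the underlying measure space is a finite interval, so the finite-measure technicalities you handle are even simpler there.
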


\begin{proof}
The proof can be found in \cite[page 282]{Bogachev 2007}.
\end{proof}

For arbitrary functions $Q(\cd)\ges0$ in $L^1(0,T;\dbS^n)$ and $R(\cd)\gg0$ in $L^\i(0,T;\dbS^m)$,
let $P(\cd)$ be the corresponding solution of the Riccati equation \rf{Ric:P}, and let $\F(\cd)$
and $\Psi(\cd)$ be the solutions to equations \rf{Phi} and \rf{Psi}, respectively.
Recall from Remark \ref{rmk:P_i} that the solution $P_i(\cd)$ of \rf{Ric-P_i} converges to $P(\cd)$
on $[0,T)$ as $i\to\i$. We have the following two lemmas.

\begin{lemma}\label{lmm:2}\sl For $i=1,2,\ldots,$ let $\F_i(\cd)$ be the solution to
$$\left\{\2n\ba{ll}
\ds\dot\F_i(s)=\big[A(s)-B(s)R(s)^{-1}B(s)^\top P_i(s)\big]\F_i(s),\qq s\in[0,T], \\
\ns\ds\F_i(0)=I.\ea\right.$$
We have the following:

\ms

{\rm(i)} $\{\F_i(s)\}^\i_{i=1}$ is uniformly bounded on $[0,T]$, and
$$\lim_{i\to\i}\F_i(s)=\F(s),\qq\forall s\in[0,T].$$

{\rm(ii)} $\{\F_i(s)^{-1}\}^\i_{i=1}$ is uniformly bounded on compact subintervals of $[0,T)$.
\end{lemma}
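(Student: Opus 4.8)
The plan is to exploit the Riccati structure of $P_i(\cd)$ to produce an energy identity for the quadratic form $\lan P_i(s)\F_i(s)x,\F_i(s)x\ran$, in the same spirit as the proof of Proposition \ref{prop:P-1}. Differentiating this form along the flow of $\F_i$ and substituting \rf{Ric-P_i} for $\dot P_i$, the cross terms collapse and one is left with
$${d\over ds}\lan P_i(s)\F_i(s)x,\F_i(s)x\ran=-\lan\big[Q(s)+P_i(s)B(s)R(s)^{-1}B(s)^\top P_i(s)\big]\F_i(s)x,\F_i(s)x\ran\les0.$$
Thus $s\mapsto\lan P_i(s)\F_i(s)x,\F_i(s)x\ran$ is non-increasing on the whole interval $[0,T]$ (note $\F_i$ lives on all of $[0,T]$ since $P_i(T)=iI$ is finite), giving the basic a priori bound $\lan P_i(s)\F_i(s)x,\F_i(s)x\ran\les\lan P_i(0)x,x\ran$ for every $s\in[0,T]$.

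For the uniform boundedness in (i) I would sandwich this form between two $i$-independent bounds, both coming from monotonicity in $i$. On the right, $P_i(0)\uparrow P(0)$ gives $\lan P_i(0)x,x\ran\les\lan P(0)x,x\ran$. For a lower bound I would use $P_i(s)\ges P_1(s)$ together with the fact, recorded in Section 4, that $P_1(\cd)\in C([0,T];\dbS^n_+)$ is strictly positive on the compact interval $[0,T]$, so $P_1(s)\ges c_0I$ for some $c_0>0$. Combining yields $c_0|\F_i(s)x|^2\les\lan P(0)x,x\ran$, a bound on $\F_i(s)$ independent of both $i$ and $s$. This is precisely the step where a naive Gronwall estimate breaks down: the coefficient $B R^{-1}B^\top P_i$ is not uniformly $L^1$ near $T$ (indeed $P_i(T)=iI\to\i$), so only the energy identity delivers control up to the terminal time.

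The convergence $\F_i(s)\to\F(s)$ on $[0,T)$ then follows from continuous dependence of ODE solutions on their coefficients: by Remark \ref{rmk:P_i}, $P_i\to P$ pointwise and stays uniformly bounded on each $[0,\t]\subset[0,T)$, so $\int_0^\t|B R^{-1}B^\top(P_i-P)|\,ds\to0$ by dominated convergence, and a Gronwall estimate for $\F_i-\F$ closes the argument on $[0,\t]$. At the endpoint $s=T$ the same a priori bound reads $i\,|\F_i(T)x|^2\les\lan P(0)x,x\ran$, so $\F_i(T)\to0=\F(T)$, the last equality being exactly Proposition \ref{prop:P-1}.

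Finally, for (ii) I would pass to the inverse: $\F_i(\cd)^{-1}$ solves ${d\over ds}\F_i^{-1}=-\F_i^{-1}\big[A-B R^{-1}B^\top P_i\big]$ with $\F_i(0)^{-1}=I$. On a fixed compact subinterval $[0,\t]\subset[0,T)$ the coefficient is bounded in $L^1(0,\t)$ uniformly in $i$, since $A\in L^1$, $B\in L^2$, $R^{-1}\in L^\i$, and $\sup_i\sup_{[0,\t]}|P_i|<\i$ by Remark \ref{rmk:P_i}(ii); Gronwall's inequality then bounds $|\F_i(s)^{-1}|$ by the exponential of this uniform $L^1$ norm, which is the asserted uniform bound. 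The only genuine obstacle in the whole lemma is the uniform-in-$i$ control of $\F_i$ up to $s=T$, resolved by the monotonicity identity combined with the two-sided monotone bounds $c_0I\les P_1(s)\les P_i(s)$ and $P_i(0)\les P(0)$; the remaining assertions are routine ODE continuous-dependence estimates.
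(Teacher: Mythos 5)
Your proof is correct and takes essentially the same route as the paper: the energy identity for $\lan P_i(s)\F_i(s)x,\F_i(s)x\ran$ is the paper's integration-by-parts inequality in quadratic-form guise, the sandwich $c_0I\les P_1(s)\les P_i(s)$ together with $P_i(0)\les P(0)$ is exactly how the paper gets the uniform bound up to $s=T$, and the remaining steps (Gronwall plus dominated convergence on compact subintervals of $[0,T)$, the bound $i|\F_i(T)x|^2\les\lan P(0)x,x\ran$ at the endpoint, and the inverse-ODE Gronwall estimate for (ii)) match the paper's argument step for step. Your added remark on why a naive Gronwall estimate cannot reach $s=T$ is a nice observation but does not change the substance.
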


\begin{proof}
(i) Let $A_i(s)=A(s)-B(s)R(s)^{-1}B(s)^\top P_i(s)$. By the integration by parts formula,
we have for any $s\in[0,T]$,
\bel{F_iP_iF_i-P_i<0}\ba{ll}
\ds\F_i(s)^\top P_i(s)\F_i(s)-P_i(0)\\
\ns\ds=\int_0^s\F_i(r)^\top\big[A_i(r)^\top P_i(r)+\dot P_i(r)+P_i(r)A_i(r)\big]\F_i(r)dr\\
\ns\ds=-\int_0^s\F_i(r)^\top\big[Q(r)+P_i(r)B(r)R(r)^{-1}B(r)^\top P_i(r)\big]\F_i(r) dr\les0.
\ea\ee
Since for any $i\ges1$, $P_i(s)\ges P_1(s)>0$ for all $s\in[0,T]$ and $P(0)\ges P_i(0)$
(see Remark \ref{rmk:P_i} (i)), there exists a constant $\mu>0$ such that
$$\mu\F_i(s)^\top\F_i(s)\les\F_i(s)^\top P_1(s)\F_i(s)
\les\F_i(s)^\top P_i(s)\F_i(s)\les P_i(0)\les P(0).$$
This implies that $|\F_i(s)|^2\les\mu^{-1}\sqrt{n}|P(0)|$ for all $i\ges1$ and all $s\in[0,T]$.
The first assertion follows readily. For the second, denote
$$\Pi(s) = B(s)R(s)^{-1}B(s)^\top P(s),\qq \Pi_i(s) = B(s)R(s)^{-1}B(s)^\top P_i(s),$$
and note that for $s\in[0,T)$,
$$\F_i(s)-\F(s)=\int_0^s\Big\{A_i(r)\big[\F_i(r)-\F(r)\big]+\big[\Pi(r)-\Pi_i(r)\big]\F(r)\Big\}dr.$$
By the Gronwall inequality, we have
$$\big|\F_i(s)-\F(s)\big|\les\int_0^se^{\int_r^s |A_i(u)|du}|\Pi(r)-\Pi_i(r)||\F(r)|dr,\qq s\in[0,T).$$
Since $P_i(s)\to P(s)$ on $[0,T)$ and $\{P_i(s)\}^\i_{i=1}$ is uniformly bounded on compact
subintervals of $[0,T)$ (see Remark \ref{rmk:P_i} (ii)), the dominated convergence theorem yields
$$\lim_{i\to\i}\F_i(s)=\F(s),\qq\forall s\in[0,T).$$
For the case $s=T$, \rf{F_iP_iF_i-P_i<0} gives
$$i\F_i(T)^\top\F_i(T)=\F_i(T)^\top P_i(T)\F_i(T)\les P_i(0)\les P(0),\qq\forall i\ges1,$$
from which follows
$$\lim_{i\to\i}\F_i(T)=0=\F(T).$$

\ss

(ii) One has
$$\left\{\2n\ba{ll}
\ds{d\over ds}\big[\F_i(s)^{-1}\big]=-\F_i(s)^{-1}A_i(s),\qq s\in[0,T], \\
\ns\ds\F_i(0)^{-1}=I.\ea\right.$$
Thus,
$$|\F_i(s)^{-1}|\les|I|+\int_0^s|A_i(r)||\F_i(r)^{-1}|dr,$$
and by the Gronwall inequality we have
$$|\F_i(s)^{-1}|\les|I|e^{\int_0^s|A_i(r)|dr}=\sqrt{n}\exp\lt\{\int_0^s
\Big|A(r)-B(r)R(r)^{-1}B(r)^\top P_i(r)\Big|dr\rt\}.$$
The result then follows immediately form the uniform boundedness of $\{P_i(s)\}^\i_{i=1}$
on compact subintervals of $[0,T)$.
\end{proof}

\begin{lemma}\label{lmm:3}\sl
For $i=1,2,\ldots,$ let $\eta_i(\cd)$ be the solution to \rf{eta_i}.
Then $\{\eta_i(s)\}^\i_{i=1}$ is uniformly bounded on compact subintervals of $[0,T)$, and
\bel{lim-eta_i}\lim_{i\to\i}\eta_i(s)=-\big[\Psi(T)\F(s)^{-1}\big]^\top y,\qq \forall s\in[0,T).\ee
\end{lemma}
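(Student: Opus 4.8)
The plan is to derive a closed-form representation of $\eta_i(\cd)$ in terms of the fundamental matrix $\F_i(\cd)$ from Lemma \ref{lmm:2}, and then pass to the limit $i\to\i$ inside that representation.

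First I would set $A_i(s)=A(s)-B(s)R(s)^{-1}B(s)^\top P_i(s)$, so that \rf{eta_i} reads $\dot\eta_i=-A_i^\top\eta_i$ with $\eta_i(T)=-iy$, while $\F_i$ solves $\dot\F_i=A_i\F_i$. A direct differentiation shows that $(\F_i^\top)^{-1}$ solves the same homogeneous equation $\dot Y=-A_i^\top Y$; hence by uniqueness $\eta_i(s)=(\F_i(s)^\top)^{-1}c_i$ for a constant vector $c_i$, and matching the terminal value forces $c_i=-i\F_i(T)^\top y$. This yields
$$\eta_i(s)=-i\big[\F_i(T)\F_i(s)^{-1}\big]^\top y,\qq s\in[0,T),$$
which is legitimate since $P_i$ is finite on all of $[0,T]$, so $\F_i$ is invertible throughout $[0,T]$.

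The difficulty is now visible: as $i\to\i$ the scalar $i$ blows up while $\F_i(T)\to0$ (Lemma \ref{lmm:2}(i)), so the product $i\F_i(T)$ must be identified and controlled. The key idea is to set $\Psi_i(s)=P_i(s)\F_i(s)$ and differentiate: the Riccati equation \rf{Ric-P_i} gives $\dot\Psi_i=-A^\top\Psi_i-Q\F_i$ with $\Psi_i(0)=P_i(0)$, which is the same linear equation \rf{Psi} satisfied by $\Psi$, but with $\F_i$ in place of $\F$. Since $P_i(T)=iI$, this gives the crucial identity $i\F_i(T)=\Psi_i(T)$, and therefore
$$\eta_i(s)=-\big[\Psi_i(T)\F_i(s)^{-1}\big]^\top y,\qq s\in[0,T).$$

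It then remains to show $\Psi_i(T)\to\Psi(T)$, which I regard as the main obstacle, since the convergence must hold up to the singular endpoint $T$. For this I would use the variation-of-constants formula
$$\Psi_i(s)=(\F_A(s)^\top)^{-1}\Big[P_i(0)-\int_0^s\F_A(r)^\top Q(r)\F_i(r)\,dr\Big],$$
together with its analogue for $\Psi$. The convergence $P_i(0)\to P(0)$ comes from Remark \ref{rmk:P_i}(i). The delicate term is the integral evaluated at $s=T$: here $Q$ is only $L^1$ and one has merely pointwise convergence $\F_i(r)\to\F(r)$, but the uniform bound on $\{\F_i\}$ from Lemma \ref{lmm:2}(i) furnishes the dominating function $C|Q(\cd)|\in L^1$, so the dominated convergence theorem yields $\int_0^T\F_A^\top Q\F_i\to\int_0^T\F_A^\top Q\F$ and hence $\Psi_i(T)\to\Psi(T)$. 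Finally, since $\F_i(s)\to\F(s)$ with $\F(s)$ invertible on $[0,T)$, continuity of matrix inversion gives $\F_i(s)^{-1}\to\F(s)^{-1}$, and passing to the limit in the representation of $\eta_i$ produces exactly $-[\Psi(T)\F(s)^{-1}]^\top y$, establishing \rf{lim-eta_i}. The uniform boundedness of $\{\eta_i(s)\}$ on compact subintervals of $[0,T)$ then follows at once from the boundedness of the convergent sequence $\{\Psi_i(T)\}$ together with the uniform boundedness of $\{\F_i(s)^{-1}\}$ on such subintervals (Lemma \ref{lmm:2}(ii)).
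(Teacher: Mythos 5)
Your proposal is correct and follows essentially the same route as the paper: the same explicit formula $\eta_i(s)=-i\big[\F_i(T)\F_i(s)^{-1}\big]^\top y$, the same key identity $i\F_i(T)=P_i(T)\F_i(T)=\Psi_i(T)$ with $\Psi_i=P_i\F_i$ solving the linear equation \rf{Psi} (with $\F_i$ in place of $\F$), and the same inputs from Lemma \ref{lmm:2} and Remark \ref{rmk:P_i}. The only cosmetic difference is that you establish $\Psi_i(T)\to\Psi(T)$ via an explicit variation-of-constants formula plus dominated convergence, whereas the paper runs a Gronwall estimate on $\Psi_i-\Psi$; these are interchangeable ways of exploiting the same linear structure.
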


\begin{proof}
It is easy to verify that
\bel{eta_i=}\eta_i(s)=-i\big[\F_i(T)\F_i(s)^{-1}\big]^\top y,\qq s\in[0,T].\ee
By Lemma \ref{lmm:2}, $\lim_{i\to\i}\F_i(s)=\F(s)$ for all $s\in[0,T]$.
So in order to prove \rf{lim-eta_i}, it remains to show
\bel{lim-iF_i(T)}\lim_{i\to\i}i\F_i(T)=\Psi(T).\ee
For this, let $\Psi_i(s)=P_i(s)\F_i(s)$. By differentiating we get
\begin{eqnarray*}
\dot\Psi_i(s)\3n&=\3n& \dot P_i(s)\F_i(s)+P_i(s)\dot\F_i(s)\\
\3n&=\3n& \big[\dot P_i(s)+P_i(s)A(s)-P_i(s)B(s)R(s)^{-1}B(s)^\top P_i(s)\big]\F_i(s)\\
\3n&=\3n& -A(s)^\top P_i(s)\F_i(s)-Q(s)\F_i(s)\\
\3n&=\3n& -A(s)^\top\Psi_i(s)-Q(s)\F_i(s).
\end{eqnarray*}
Thus, $P_i(\cd)\F_i(\cd)$ solves the following ODE:
$$\left\{\2n\ba{ll}
\ds\dot\Psi_i(s)=-A(s)^\top\Psi_i(s)-Q(s)\F_i(s),\qq s\in[0,T], \\
\ns\ds\Psi_i(0)=P_i(0).\ea\right.$$
Since $P_i(0)\to P(0)$, $\F_i(s)\to \F(s)$ as $i\to\i$ and $\{\F_i(s)\}^\i_{i=1}$ is
uniformly bounded on $[0,T]$, we conclude by the Gronwall inequality that
$$\lim_{i\to\i}\Psi_i(s)=\Psi(s),\qq\forall s\in[0,T].$$
In particular,
$$\lim_{i\to\i}i\F_i(T)=\lim_{i\to\i}P_i(T)\F_i(T)=\lim_{i\to\i}\Psi_i(T)=\Psi(T).$$
Finally, the uniform boundedness of $\{\eta_i(s)\}^\i_{i=1}$ on compact subintervals of $[0,T)$
follows from \rf{eta_i=}, \rf{lim-iF_i(T)}, and that of $\{\F_i(s)^{-1}\}^\i_{i=1}$.
\end{proof}

\begin{proof}[\indent\textbf{Proof of Theorem {\rm\ref{thm:main-2}}}]
For arbitrary but fixed $\l\in\G=\{(\l_1,\ldots,\l_k):\l_i\ges0,~i=1,\ldots,k\}$, denote
$$ Q(s)\equiv Q(\l,s)=Q_0(s)+\sum_{i=1}^k \l_iQ_i(s),
\q R(s)\equiv R(\l,s)=R_0(s)+\sum_{i=1}^k \l_iR_i(s).$$
Let $P(\cd)\equiv P(\l,\cd)$, $\Pi(\cd)\equiv\Pi(\l,\cd)$, $\F(\cd)\equiv\F(\l,\cd)$,
and $\Psi(\cd)\equiv\Psi(\l,\cd)$ be the solutions to \rf{Ric:P(l)}, \rf{Ric:Pi(l)},
\rf{Phi(l)}, and \rf{Psi(l)}, respectively.
According to Theorem \ref{thm:duality}, it suffices to show ¡¡
\bel{suf-1}\ba{lll}
\ds V(\l,t,x,y)\3n&\deq&\3n\ds \inf_{u(\cd)\in\cU(t,x,y)}J(\l,t,x,y;u(\cd))\\
\ns\3n&=&\3n\ds\lan P(t)x,x\ran-2\lan\Psi(T)\F(t)^{-1}x,y\ran+\lan \Pi(T)y,y\ran, \ea\ee
and that the (unique) optimal control of Problem (CLQ*) with the cost functional
$J(\l,t,x,y;u(\cd))$ is given by
\bel{suf-2}u^*(s)=-R(s)^{-1}B(s)^\top\big[P(s)X^*(s)+\eta(s)\big],\qq s\in[t,T),\ee
where
$$\eta(s)=-\big[\Psi(T)\F(s)^{-1}\big]^\top y,\qq s\in[0,T),$$
and $X^*(\cd)$ is the solution to
$$\left\{\2n\ba{ll}
\ds\dot X^*(s)=\big[A(s)-B(s)R(s)^{-1}B(s)^\top P(s)\big]X^*(s)-B(s)R(s)^{-1}B(s)^\top\eta(s),\qq s\in[t,T), \\
\ns\ds X^*(t)=x. \ea\right.$$
For this we use Theorem \ref{thm:lim-LQi}. Recall from Section 4 that the value function
of the corresponding Problem (LQ)$_i$ is
$$V_i(t,x,y)=\lan P_i(t)x,x\ran+2\lan\eta_i(t),x\ran+V_i(t,0,y)$$
and converges pointwise to $V(\l,t,x,y)$. Letting $i\to\i$, we obtain \rf{suf-1} from
Remark \ref{rmk:P_i} (i), Lemma \ref{lmm:3}, and Proposition \ref{prop:exi-Pi}.
To prove \rf{suf-2}, let $X_i^*(\cd)$ be the solutions to \rf{X_i*} and set
$$\Pi(s)=B(s)R(s)^{-1}B(s)^\top, \qq A_i(s)=A(s)-B(s)R(s)^{-1}B(s)^\top P_i(s).$$
Then we have for any $t\les s<T$,
$$X^*_i(s)-X^*(s)=\int_t^s\Big\{A_i(r)[X^*_i(r)-X^*(r)]
+\Pi(r)[P(r)-P_i(r)]X^*(r)+\Pi(r)[\eta(r)-\eta_i(r)]\Big\}dr.$$
An application of the Gronwall inequality yields
$$|X^*_i(s)-X^*(s)|\les\int_t^s e^{\int_r^s |A_i(u)|du}|\Pi(r)|\Big\{|P(r)-P_i(r)||X^*(r)|
+|\eta(r)-\eta_i(r)|\Big\}dr,\q\forall t\les s<T.$$
Since
$$\lim_{i\to\i}P_i(s)=P(s),\q\lim_{i\to\i}\eta_i(s)=\eta(s),\qq \forall s\in[0,T),$$
and the sequences $\{P_i(s)\}^\i_{i=1}$ and $\{\eta_i(s)\}^\i_{i=1}$ are uniformly bounded
on compact subintervals of $[0,T)$ (see Remark \ref{rmk:P_i} (ii) and Lemma \ref{lmm:3}),
we have by the dominated convergence theorem,
$$\lim_{i\to\i}X^*_i(s)=X^*(s),\qq \forall s\in[0,T).$$
It follows that the sequence $\{u^*_i(\cd)\}^\i_{i=1}$ defined by \rf{u*_i} converges
to $u^*(s)$ for all $s\in[t,T)$ as $i\to\i$. On the other hand, from the proof of
Theorem \ref{thm:lim-LQi} we see that $\{u^*_i(\cd)\}^\i_{i=1}$ is bounded in the norm
of $L^2(t,T;\dbR^m)$. Thus, by Lemma \ref{lmm:1}, $\{u^*_i(\cd)\}^\i_{i=1}$ converges
weakly to $u^*(\cd)$ in $L^2(t,T;\dbR^m)$. The desired result then follows from Theorem
\ref{thm:lim-LQi} (ii).
\end{proof}

\section{Examples}

In this section we present two examples illustrating the results obtained.
In the first example, the integral quadratic constraints are absent,
in which case the optimal parameter $\l^*$ in Theorem \ref{thm:main-2} is obviously zero.
Such kind of problems might represent the selection of a thrust program for a aircraft
which must reach the destination limits in a given time.

\begin{example}\rm Consider the one-dimensional state equation
$$\left\{\2n\ba{ll}
\ds \dot X(s)=X(s)+u(s),\q s\in[0,T],\\
\ns X(0)=x,\ea\right.$$
and the cost functional
$$J(x,y;u(\cd))=\int_0^T |u(s)|^2 ds.$$
Given the initial state $x$ and the target $y$, we seek the control $u^*(\cd)\in L^2(0,T;\dbR)$
minimizing $J(x,y;u(\cd))$, while satisfying the terminal constraint
$$X^*(T)\equiv X(T;x,u^*(\cd))=y.$$
So ${1\over2}J(x,y;u^*(\cd))$ gives the least control energy needed to reach the target $y$
at time $T$ from the initial state $x$.

\ms

We now apply Theorem \ref{thm:main-2} to find the optimal control $u^*(\cd)$.
As mentioned at the beginning of this section, the optimal parameter is zero.
Thus the corresponding Riccati equations become
\begin{eqnarray*}
&&\left\{\2n\ba{ll}
\ds \dot P(s)+2P(s)-P(s)^2=0, \q s\in[0,T),\\
\ns \lim_{s\to T}P(s)=\i,\ea\right.\\
\ns&&\left\{\2n\ba{ll}
\ds \dot\Pi(s)+2\Pi(s)+\Pi(s)^2=0, \q s\in(0,T],\\
\ns \lim_{s\to 0}\Pi(s)=\i,\ea\right.
\end{eqnarray*}
and the corresponding ODEs become
\begin{eqnarray*}
&&\left\{\2n\ba{ll}
\ds \dot \F(s)=[1-P(s)]\F(s), \q s\in[0,T),\\
\ns \F(0)=1, \ea\right.\\
\ns&&\left\{\2n\ba{ll}
\ds \dot \Psi(s)=-\Psi(s), \q s\in[0,T],\\
\ns \Psi(0)=P(0). \ea\right.
\end{eqnarray*}
A straightforward calculation leads to
$$P(s)={2\over 1-e^{2(s-T)}},\q s\in[0,T);\qq \Pi(s)={2\over e^{2s}-1},\q s\in(0,T],$$
and by the variation of constants formula,
$$\F(s)={e^{2T-s}-e^s\over e^{2T}-1},\q \Psi(s)={2e^{2T-s}\over e^{2T}-1},\q s\in[0,T].$$
Now the closed-loop system reads
$$\left\{\2n\ba{ll}
\ds\dot X^*(s)=[1-P(s)]X^*(s)-\eta(s),\q s\in[0,T), \\
\ns\ds X^*(0)=x,\ea\right.$$
where
$$\eta(s)=-\big[\Psi(T)\F(s)^{-1}\big]^\top y=-{2e^Ty\over e^{2T-s}-e^s},\q s\in[0,T).$$
A bit of computation using the variation of constants formula shows that
$$X^*(s)={e^{2T-s}-e^s\over e^{2T}-1}\,x
+{(e^s-e^{-s})e^T\over e^{2T}-1}\,y,\q s\in[0,T].$$
Thus, the optimal control $u^*(\cd)$ is given by
$$u^*(s)= -[P(s)X^*(s)+\eta(s)]={2e^{T-s}\over 1-e^{2T}}\big(e^Tx-y\big),\q s\in[0,T],$$
and the least control energy needed to reach the target $y$ at time $T$ from the initial
state $x$ is given by
\begin{eqnarray*}
{1\over2}J(x,y;u^*(\cd))
\3n&=&\3n {1\over2}\[\lan P(0)x,x\ran-2\lan\Psi(T)\F(0)^{-1}x,y\ran+\lan \Pi(T)y,y\ran\],\\
\3n&=&\3n {1\over e^{2T}-1}\big(e^Tx-y\big)^2.
\end{eqnarray*}
\end{example}

Now we present an example in which the control energy is limited.
Such kind of problems may arise when minimizing flight cost of completing
the trip in a given time with finite fuel.

\begin{example}\rm
Consider the one-dimensional state equation
$$\left\{\2n\ba{ll}
\ds \dot X(s) = X(s)+u(s), \q s\in[0,1],\\
\ns X(0)=1.\ea\right.$$
We want to minimize
$$ J_0(u(\cd)) = \int_0^1\[15|X(s)|^2+|u(s)|^2\]ds $$
over all controls $u(\cd)\in L^2(0,1;\dbR)$ subject to
$$ X(1)=0, \qq J_1(u(\cd))\equiv\int_0^1|u(s)|^2ds\les 3. $$
To this end, we note that in this example the equation for $P(\l,\cd)$ ($\l\ges0$) becomes
$$\left\{\2n\ba{ll}
\ds\dot P(\l,s)+2P(\l,s)+15-{P(\l,s)^2\over 1+\l}=0,\q s\in[0,1),\\
\ns\lim_{s\to 1}P(\l,s)=\i.\ea\right.$$
It is easily verified that
$$P(\l,s) = \l+1+\sqrt{(\l+1)(\l+16)} + {2\sqrt{(\l+1)(\l+16)}\,\G(\l,s)\over\G(\l,1)-\G(\l,s)},\q s\in[0,1),$$
where
$$\G(\l,s)=e^{{2\sqrt{(\l+1)(\l+16)}\over \l+1}s}.$$
By calculating the derivative of
$$L(\l)\deq P(\l,0)-3\l,$$
we obtain the optimal parameter $\l^*\approx 0.1869$.
Now the closed-loop system reads
$$\left\{\2n\ba{ll}
\ds\dot X^*(s)=\lt[1-{P(\l^*,s)\over 1+\l^*}\rt]X^*(s),\q s\in[0,1), \\
\ns\ds X^*(0)=1.\ea\right.$$
By the variation of constants formula we have
$$X^*(s)= {\G(\l^*,1)-\G(\l^*,s)\over (\G(\l^*,1)-1)\sqrt{\G(\l^*,s)}}, \q s\in[0,1].$$
Thus, the optimal control $u^*(\cd)$ is given by
$$u^*(s)= -{P(\l^*,s)\over 1+\l^*}X^*(s)= {(\a+1)e^{\a(2-s)}+(\a-1)e^{\a s}\over 1-e^{2\a}}, \q s\in[0,1],$$
where
$$\a= {\sqrt{(\l^*+1)(\l^*+16)}\over \l^*+1}\approx 3.6929.$$
\end{example}

\section{Conclusions}

We have developed a systematic approach to the constrained LQ optimal control problem
based on duality theory and approximation techniques.
The problem gives rise to a Riccati differential equation with infinite terminal value
as a result of the non-free feature of the terminal state.
It is shown that by solving the Riccati equation and an optimal parameter selection problem,
the optimal control can be represented as a feedback of the current and terminal states.
We extensively investigate the Riccati equation by a penalty method, and with the solutions
of two Riccati-type equations, we explicitly solve a parameterized LQ problem without the
integral quadratic constraints.
This allows us to determine the optimal parameter by simply calculating derivatives.
Our method also provides some alternative and useful viewpoint to study optimal control of
exactly controllable stochastic systems. Research on this topic is currently in progress.

\end{document}